\providecommand{\U}[1]{\protect \rule{.1in}{.1in}}
\newtheorem{theorem}{Theorem}[section]
\newtheorem*{theorem*}{Theorem}
\newtheorem{lemma}[theorem]{Lemma}
\newtheorem{proposition}[theorem]{Proposition}
\theoremstyle{definition}
\newtheorem{remark}[theorem]{Remark}
\newtheorem{example}[theorem]{Example}
\newcommand{\lipnorm}[1]{\left\|#1\right\|_\textup{Lip}}  
\newcommand{\lipfree}[1]{\mathcal{F}({#1})}
\newcommand{\Lip}[1]{{\textup{Lip}(#1)}}
\newcommand{\restricted}[2]{#1\mathord{\upharpoonright_{#2}}}
\newcommand{\loclipnorm}[1]{\left\|#1\right\|_\textup{loc}}
\newcommand{\locLip}[1]{\textup{Lip}^{\textup{loc}}(#1)}
\newcommand{\LipO}[1]{{\textup{Lip}_{0}(#1)}}
\newcommand{\locLipO}[1]{\textup{Lip}_{0}^{\textup{loc}}(#1)}
\newcommand{\supp}{\textup{supp}}
\author[G. Flores]{Gonzalo Flores}
\address[G. Flores]{Universidad de O'Higgins, Unidad de Acompañamiento Estudiantil, Av. Lib. Gral. Bernardo O'Higgins 611, Rancagua, Chile}
\email{gonzalo.flores@uoh.cl}
\begin{document}

\title[An isometric representation for $\lipfree{M}$ for length spaces embedded in $\mathbb{R}^{n}$]{An Isometric Representation for the Lipschitz-free Space of Length Spaces Embedded in Finite-dimensional Spaces}
%-------------------------------------------------------

% KEYWORDS 
\subjclass[2020]{Primary 46B04, 46B10; Secondary 46F10}
%46B20 Geometry and structure of normed linear spaces
%54E35 Metric spaces, metrizability
%47B01 Operators on Banach spaces
%47B07  Linear operators defined by compactness properties
%47B33  Linear composition operators

\keywords{Lipschitz-free spaces, Length space, Lipschitz function, Isometric representation}

\begin{abstract}
    For a domain $\Omega$ in a finite-dimensional space $E$, we consider the metric space $M=(\Omega,d)$ where $d$ is the intrinsic distance in $\Omega$. We obtain an isometric representation of the space $\LipO{M}$ as a subspace of $L^{\infty}(\Omega;E^{*})$ and we use this representation in order to obtain the corresponding isometric representation for the Lipschitz-free space $\lipfree{M}$ as a quotient of the space $L^{1}(\Omega;E)$. We compare our result with those existent in the literature for bounded domains with Lipschitz boundary, and for convex domains, which can be then deduced as corollaries of our result.
\end{abstract}

\maketitle

\section{Introduction}\label{sec1}

In recent years, Lipschitz-free spaces have been an active research topic in different contexts, such as computer science, optimization and Banach space geometry. The construction of these spaces can be found in these areas with different names for both the space and its norm. More precisely, Lipschitz-free spaces can be found as Arens-Eells spaces \cite[Chapter 3]{W18}, and are closely related to the Wasserstein-1 space \cite[Chapter 5]{S15}. In this context, for a finite diameter metric space $X$, the Arens-Eells space is defined starting from molecules defined over $X$, that is, finitely supported functions $f:X\to\mathbb{R}$ such that $\sum_{p\in X} f(p)=0$, while for compact metric spaces, the Wasserstein-1 space is defined in terms of probability measures over $X$ with finite first moment. These spaces are endowed with the Arens-Eells norm and Kantorovich-Rubinstein metric, respectively, which makes the first a predual of the space of Lipschitz functions defined over $X$ which vanish at a fixed point $x_{0}$, while the second one embeds isometrically into $\mathcal{F}(X)$ via the Kantorovich-Rubinstein duality. 

In general, each area has developed research pointing to results which are specific for the topic at hand. Nevertheless, some recent results point to the study of these spaces considering the different areas of research altogether and how some results from one of the areas can be applied to the others, as can be seen for example in \cite{OO19,OO20,OO22,OO24}

In the following, we will focus mainly on the study of Lipschitz-free spaces from the perspective of Banach space geometry and metric geometry. In this sense, we refer to the seminal paper Lipschitz-free Banach spaces by G. Godefroy and N. Kalton \cite{GK03}, which reestablished Lipschitz-free spaces as an active research topic. There, we can find the different known results for these spaces at that time, such as the existence of a unique extension for Lipschitz functions defined over two given metric spaces on the base space to a linear operator defined between the corresponding Lipschitz-free spaces, lifting properties, approximation properties, etc. The results in this direction have been complemented in \cite{AP20,CDW16,DKO19}.

In the present work, we consider the following definition of the Lipschitz-free space. For a pointed metric space $(M,\rho)$, that is, a metric space endowed with a distinguished point, usually denoted by $x_{0}$, which we refer to as the base point of $M$, define
\[ \LipO{M} := \{ f:M\to\mathbb{R} : f \text{ Lipschitz and } f(x_{0})=0 \}. \]
When endowed with
\[ \|f\|_{L} := \sup_{x,y\in M, x\neq y} \frac{f(y)-f(x)}{\rho(x,y)},\]
this space becomes a (dual) Banach space. The Lipschitz-free space is then obtained as its canonical predual, that is, the closed linear span in $\LipO{M}^{*}$ of the evaluation functionals. From this, it can be readily seen that $M$ isometrically embeds into $\lipfree{M}$ via the evaluation functionals.

From this last observation, it is expected that geometric properties of $\lipfree{M}$ might be affected by the geometry of the host space $M$. When $M$ is embedded in a finite dimensional space, isometric representations for $\lipfree{M}$ can be found in \cite[Theorem 2.4]{BCJ05} and \cite[Theorem 1.1]{CKK17}. More precisely, \cite[Theorem 2.4]{BCJ05} states that when $M=\overline{\Omega}$ for $\Omega\subset E$ a bounded domain with Lipschitz boundary endowed with its intrinsic distance, $\lipfree{M}$ is linearly isometric to the quotient of $L^{1}(\Omega;E)$ with respect to the subspace of functions with null divergence in the sense of distribution in $E$. On the other hand, \cite[Theorem 1.1]{CKK17} states that the same isometric representation is true when $M=\Omega\subset E$ is a convex domain endowed with the metric induced by the norm of $E$.

It is not difficult to see that none of these results implies the other. Nevertheless, the sets $\Omega$ considered in each case share some properties, such as being connected, having Lipschitz boundary and being endowed with the intrinsic distance (in the convex case, this distance coincides with that induced by the norm of $E$). Having this properties in mind, we propose a common ground to obtain a generalization for both \cite[Theorem 2.4]{BCJ05} and \cite[Theorem 1.1]{CKK17}, which goes as follows

\begin{theorem*}
    Let $\Omega\subset E$ be a domain and consider $M$ the set $\Omega$ endowed with its intrinsic distance and fix $x_{0}\in\Omega$ as the base point of $M$. Then, $\lipfree{M}$ is linearly isometric to $L^{1}(\Omega;E)/X$, where
    \[ X := \{ h\in L^{1}(\Omega;E) : \textup{div}(h) = 0 \text{ in } \mathcal{D}'(E) \}. \]
    Moreover, if $S$ is the preadjoint of the linear isometry $Tf:=\nabla f$, then it holds that $S[h] = \delta(x)$ if and only if $-\textup{div}(h) = \delta_{x}-\delta_{x_{0}}$ in $\mathcal{D}'(\Omega)$, where for $x\in\Omega$, $\delta_{x}$ is the Dirac distribution centered at $x$ and $[h]$ stands for the equivalence class of $h$ in $L^{1}(\Omega;E)/X$.
\end{theorem*}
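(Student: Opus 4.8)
The plan is to deduce the statement from the isometric representation of $\LipO{M}$ by a duality argument. Recall the map $Tf:=\nabla f$, which is a linear isometry from $\LipO{M}$ into $L^{\infty}(\Omega;E^{*})$: Rademacher's theorem furnishes $\nabla f$ almost everywhere, the inequality $\|\nabla f\|_{\infty}\le\|f\|_{L}$ is immediate from the definition of the directional derivative, and the reverse inequality $\|f\|_{L}\le\|\nabla f\|_{\infty}$ uses the length structure of $M$, obtained by integrating $\nabla f$ along rectifiable paths whose lengths approximate the intrinsic distance. I would work throughout with the two canonical dualities $\LipO{M}=\lipfree{M}^{*}$ and $L^{\infty}(\Omega;E^{*})=L^{1}(\Omega;E)^{*}$, the latter being valid precisely because $E$ is finite dimensional.

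The first step is to produce the preadjoint $S$. For fixed $h\in L^{1}(\Omega;E)$, consider the functional $\Phi_{h}\colon f\mapsto\int_{\Omega}\langle\nabla f(x),h(x)\rangle\,dx$ on $\LipO{M}$, which satisfies $|\Phi_{h}(f)|\le\|f\|_{L}\|h\|_{1}$. The point is that $\Phi_{h}$ is weak$^{*}$-continuous, hence belongs to $\lipfree{M}$: on bounded subsets of $\LipO{M}$ the weak$^{*}$ topology is that of pointwise convergence, and if $f_{n}\to f$ pointwise with $\sup_{n}\|f_{n}\|_{L}<\infty$, then $\int_{\Omega}f_{n}\,\textup{div}\,\phi\to\int_{\Omega}f\,\textup{div}\,\phi$ for every $\phi\in C_{c}^{\infty}(\Omega;E)$ by dominated convergence, so that $\nabla f_{n}\to\nabla f$ weak$^{*}$ in $L^{\infty}$ (the test fields being dense in $L^{1}$ and the gradients uniformly bounded). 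Since $M\subset E$ is separable, $\lipfree{M}$ is separable, and the Banach--Dieudonn\'e / Krein--Smulian theorem upgrades this sequential statement to genuine weak$^{*}$-continuity. Setting $Sh:=\Phi_{h}$ then defines a bounded operator $S\colon L^{1}(\Omega;E)\to\lipfree{M}$ with $S^{*}=T$ by construction.

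Next I would invoke the duality between metric surjections and isometric embeddings: since $T=S^{*}$ is an isometric embedding, $S$ is a metric surjection, so it induces a linear isometry $L^{1}(\Omega;E)/\ker S\cong\lipfree{M}$. It then remains to identify $\ker S$ with $X$. By the general identity $\ker S={}^{\perp}(\textup{ran}\,S^{*})$ one has
\[ \ker S=\Big\{\,h\in L^{1}(\Omega;E): \int_{\Omega}\langle\nabla f,h\rangle=0 \text{ for all } f\in\LipO{M}\,\Big\}. \]
Testing against $f=\restricted{\psi}{\Omega}-\psi(x_{0})$ with $\psi\in C_{c}^{\infty}(E)$ gives $\ker S\subseteq X$ at once. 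The reverse inclusion $X\subseteq\ker S$ is the main obstacle: one must pass from the condition $\textup{div}(h)=0$ in $\mathcal{D}'(E)$, which a priori tests only against compactly supported functions, to the vanishing of $\int_{\Omega}\langle\nabla f,h\rangle$ for \emph{every} Lipschitz $f$, including those that do not decay near $\partial\Omega$. The mechanism is that the $\mathcal{D}'(E)$ condition, as opposed to the weaker $\mathcal{D}'(\Omega)$ one, forces the normal flux of $h$ through $\partial\Omega$ to vanish and thereby kills the boundary term in the integration by parts; making this rigorous for a merely $L^{1}$ field over a general domain, via mollification inside $\Omega$ controlled by the geometry, is the delicate point.

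For the final assertion I would unwind the preadjoint relation. By definition $S[h]=\delta(x)$ means $\int_{\Omega}\langle\nabla f,h\rangle=f(x)$ for every $f\in\LipO{M}$; since $f(x_{0})=0$, the functionals $\delta(x)$ and $\delta_{x}-\delta_{x_{0}}$ act identically on $\LipO{M}$. Restricting to $f=\phi-\phi(x_{0})$ with $\phi\in C_{c}^{\infty}(\Omega)$ yields $\int_{\Omega}\langle\nabla\phi,h\rangle=\phi(x)-\phi(x_{0})$, which is exactly $-\textup{div}(h)=\delta_{x}-\delta_{x_{0}}$ in $\mathcal{D}'(\Omega)$, giving the necessity. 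For the converse I would again rely on the approximation argument of the previous paragraph, extending the identity from compactly supported test functions to all of $\LipO{M}$ and using that $S[h]$ depends only on the class $[h]\in L^{1}(\Omega;E)/X$. I expect the single genuinely hard input, here and in the kernel identification, to be this density step; the remainder is functional-analytic bookkeeping.
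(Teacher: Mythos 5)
Your functional-analytic frame is sound and, where it works, slicker than the paper's: defining $S$ directly by $Sh:=\Phi_h$, verifying $\Phi_h\in\lipfree{M}$ via bounded pointwise-to-weak$^*$ convergence of gradients plus Banach--Dieudonn\'e, and then invoking the duality ``$S^{*}$ isometric into $\Rightarrow$ $S$ metric surjection'' to get $L^{1}(\Omega;E)/\ker S\cong\lipfree{M}$ in one stroke. This replaces the paper's two explicit inclusion steps, which instead construct compactly supported representatives $h_{xy}$ (fields supported on a ``thickened segment'' collapsing at its endpoints) realizing $(T^{*})^{-1}(\delta(y)-\delta(x))$, and show $T^{*}[\mathds{1}_{C}e_{i}]\in\lipfree{M}$ as a Bochner integral of differences of evaluation functionals over faces of hyper-rectangles. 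Your inclusion $\ker S\subseteq X$, by testing against restrictions of $\mathcal{D}(E)$ functions, is also fine, as is the necessity direction of the final $\delta$-characterization.

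However, the step you explicitly leave open --- $X\subseteq\ker S$, i.e.\ $\langle\nabla f,h\rangle=0$ for every $f\in\LipO{M}$ and every $h$ with $\textup{div}(h)=0$ in $\mathcal{D}'(E)$ --- is a genuine gap, and it is exactly the content of the paper's Proposition \ref{Xprop} ($Y=X^{\perp}$), proved there before Theorem \ref{main2}. Your proposed mechanism (``the $\mathcal{D}'(E)$ condition forces the normal flux of $h$ through $\partial\Omega$ to vanish'') cannot be made rigorous as stated: a general domain carries no boundary regularity and an $L^{1}$ field admits no normal trace, so there is no boundary term to kill. The paper's actual mechanism is purely interior: approximate $f$ by $\chi_{R}\,(f*u_{m})\in\mathcal{D}(\Omega)$, where $\chi_{R}$ is the product of a ball cutoff at scale $R$ around $x_{0}$ and a cutoff in the distance to $\Omega^{c}$ at scale $1/R$; the decisive estimates are the linear growth bound $|(f*u_{m})(x)|\leq\Lip{f}\left(\|x-x_{0}\|+\tfrac{1}{m}\right)$ played against $\|\nabla\eta_{R}\|=O(1/R)$ on the annulus, and the hypothesis ``$\textup{div}$ in $\mathcal{D}'(E)$'' enters because the zero-extension of a $\mathcal{D}(\Omega)$ function is a legitimate test function on $E$, giving $\langle\nabla\phi,h\rangle=\langle\phi,-\textup{div}(h)\rangle=0$ before passing to the limit. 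The paper's Remark after Proposition \ref{Xprop} shows the inclusion genuinely fails if one replaces $\mathcal{D}'(E)$ by $\mathcal{D}'(\Omega)$, so no soft argument can supply it. Moreover, this gap propagates: for the ``if'' direction of $S[h]=\delta(x)\iff-\textup{div}(h)=\delta_{x}-\delta_{x_{0}}$, the paper does not use density at all but the explicit compactly supported representative $\overline{h}=\sum_{i}h_{y_{i}y_{i+1}}$ along a piecewise linear path, satisfying $-\textup{div}(\overline{h})=\delta_{x}-\delta_{x_{0}}$, together with $S[h]=\delta(x)\iff[h]=[\overline{h}]$. Since your streamlined route discards these constructions, you would have to show separately that the fields with $-\textup{div}(h)=\delta_{x}-\delta_{x_{0}}$ form a single coset of $X$ --- which again requires either such an explicit representative or the approximation lemma you have not proved.
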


Before continuing in this line, it is worth to mention for contrast some recent results found in the literature for the case where the metric space $M$ is purely $1$-unrectifiable, that is, $M$ does not contain any curve fragment (for the details of the definition, see e.g. \cite{B20}). In this case, the subspace $\textup{lip}_{0}(M)\subset\LipO{M}$ of locally flat functions captures the absence of curves. In this context, \cite[Theorem B]{AGPP22} states that $M$ is purely $1$-unrectifiable if and only if $\lipfree{M}\equiv (\textup{lip}_{0}(M))^{*}$. Moreover, \cite[Theorem C]{AGPP22} states that in $\lipfree{M}$, the Radon-Nikod\'ym, Krein-Milman and Schur properties are equivalent to each other, which are also equivalent to $\lipfree{M}$ containing no isomorphic copy of $L^{1}$, and also to the completion of $M$ being purely $1$-unrectifiable.

Notice that, in a way, purely $1$-unrectifiable spaces are the complete opposite to domains in $E$, in the sense that in a domain every two points can be joined by not only a Lipschitz path, but infinitely many. Isometries for Lipschitz-free spaces have been also studied under different hypotheses for the underlying metric space in \cite{DKP16,G10,NS07}.

This paper is outlined as follows. In Section \ref{section2} we establish the notation, some preliminaries on Lipschitz and locally Lipschitz functions, and finally we establish the tools that will be used in order to obtain the desired results. In Section \ref{section3}, we begin by constructing the framework to obtain a representation for the space $\LipO{M}$, and we show that this space is isometric to the subspace of $L^{\infty}(\Omega;E^{*})$ consisting on conservative vector fields in the appropriate sense. We compare this representation with the one obtained in \cite[Proposition 3.2]{CKK17}. Using this representation, we proceed to compute a predual for the subspace of $L^{\infty}(\Omega;E^{*})$ representing $\LipO{M}$ and we show that this predual is actually isometric to $\lipfree{M}$, obtaining the main result of the present work. Finally, in Section \ref{section4} we review the relation between our results and those found in \cite{BCJ05} and \cite{CKK17}.

%-----------------------------------------------------------
\section{Framework and notation}\label{section2}

\subsection{Notation}\label{notation}

In this section, we specify the notation that will be used in the following pages

\begin{itemize}
    \item $E$ stands for a finite-dimensional space of dimension $n$. $\Omega\subset E$ is a non-empty connected open set (that is, a domain in $E$).
    \item $\|\cdot\|$ denotes a norm on $E$.
    \item $|\cdot|$ is used for the absolute value, the Euclidean norm of a vector or the Lebesgue measure of a set, depending on the context.
    \item $B(x,r)$ and $\overline{B}(x,r)$ denote the open and closed ball centered at $x$ and with radius $r$, respectively. The associated space and metric will often be clear from context.
    \item For $U\subset E$, $\mathcal{D}(U)$ denotes the spaces of test (smooth and compactly supported) functions in $U$, and $\mathcal{D}'(U)$ the corresponding space of distributions.
    \item $\partial_{i}$ and $\nabla$ denote the partial derivative with respect to the $i^{\textup{th}}$ coordinate and the gradient, respectively, in the sense of distributions.
    \item For $p\in[1,\infty]$, $L^{p}(\Omega;E)$ stands for the space of $E$ valued $p$-integrable functions when $p<\infty$, and $E$ valued essentially bounded measurable functions when $p=\infty$. In particular, $(L^{1}(\Omega;E))^{*}\equiv L^{\infty}(\Omega;E^{*})$ (since $E$ has the Radon-Nikod\'ym property).
\end{itemize}
Any additional notation will be clarified when needed.

\subsection{Preliminaries}

Let $M,N$ be metric spaces, endowed with distances $\rho$ and $\rho'$, respectively. A function $f:M\to N$ is called Lipschitz whenever there exists a number $L>0$ such that $\rho'(f(x),f(y)) \leq L \rho(x,y)$. Any such number is known as a Lipschitz constant for $f$. The Lipschitz number or Lipschitz norm of $f$, denoted by $\lipnorm{f}$, is the greatest lower bound for the Lipschitz constants, or equivalently, the least upper bound for the metric slopes of the function $f$
\[ \lipnorm{f} := \sup_{x\neq y} \frac{\rho'(f(x),f(y))}{\rho(x,y)}. \]
Notice that in this general context, and despite the name and notation, $\lipnorm{\cdot}$ is not actually a norm, neither a seminorm, since $N$ is only a metric space and as a consequence $\Lip{M,N}$ lacks a linear structure. Nevertheless, if $N$ is a vector space, the set of $N$-valued Lipschitz function become a vector space and $\lipnorm{\cdot}$ becomes a seminorm over that vector space. More precisely, $\lipnorm{f}=0$ if and only if $f$ is constant. This abuse of notation will be justified by the context where $\lipnorm{\cdot}$ is used. In the following lines we state some general properties where $N$ is just a metric space, which in the sequel will be applied in the case $N=\mathbb{R}$ endowed with its usual metric.

In the same fashion as for a Lipschitz function, we say that $f$ is locally Lipschitz at $x\in M$ if there exists an open set $A\subset M$ containing $x$ such that $\restricted{f}{A}$ is Lipschitz. In this case, the local Lipschitz number is defined as
\[ \Lip{f,x} := \inf\left\{ \lipnorm{\restricted{f}{A}} : A\subset M \text{ open and } x\in A \right\}. \]
If $f$ is locally Lipschitz at every $x\in M$, we simply say that $f$ is locally Lipschitz. For such a function, we further say that it is uniformly locally Lipschitz if the local Lipschitz numbers are bounded in $M$ and we define its uniform local Lipschitz number (or norm) as
\[ \loclipnorm{f} := \sup_{x\in M} \Lip{f,x}. \]
Just as in the case of Lipschitz functions, $\loclipnorm{\cdot}$ is a seminorm over the vector space of $N$-valued uniformly locally Lipschitz functions whenever $N$ is a vector space, which we denote by $\locLip{M,N}$. In general, $\Lip{M,N}$ is contained in $\locLip{M,N}$, with $\loclipnorm{f}\leq\lipnorm{f}$ for every $f\in\Lip{M,N}$, but the converse is not true without additional requirements, e.g. compactness of the domain.

\begin{theorem}\cite[\textbf{Theorem 2.1.6, p. 101}]{CMN19}\label{locLip}
    Let $(M,\rho),(N,\rho')$ be metric spaces, suppose that $(M,\rho)$ is compact, and let $f:M\to N$ be a function. Then, $f$ is Lipschitz whenever it is locally Lipschitz.
\end{theorem}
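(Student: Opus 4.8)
The final statement to prove is Theorem \ref{locLip}: if $(M,\rho)$ is compact and $f:M\to N$ is locally Lipschitz, then $f$ is Lipschitz. The plan is to argue by contradiction, extracting from the failure of a global Lipschitz bound a sequence of pairs of points whose slopes blow up, and then using compactness to force these pairs to converge, contradicting local control near the limit.

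First I would set up the contradiction: suppose $f$ is locally Lipschitz but not Lipschitz. Then for every $k\in\mathbb{N}$ there exist points $x_k\neq y_k$ in $M$ with
\[ \rho'(f(x_k),f(y_k)) > k\,\rho(x_k,y_k). \]
By compactness of $M$, passing to subsequences (not relabeled) I may assume $x_k\to x$ and $y_k\to y$ for some $x,y\in M$. The crucial dichotomy is whether $x=y$ or $x\neq y$. If $x\neq y$, then $\rho(x_k,y_k)\to\rho(x,y)>0$ is bounded away from zero, while the left-hand side $\rho'(f(x_k),f(y_k))$ must then exceed $k\cdot c$ for some fixed $c>0$, forcing $\rho'(f(x_k),f(y_k))\to\infty$; but this is impossible once we know $f$ is bounded, which I would establish separately (a locally Lipschitz function on a compact space is continuous, and a continuous function on a compact metric space has bounded, indeed totally bounded, image). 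This rules out $x\neq y$.

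It remains to treat the case $x=y=:p$. Since $f$ is locally Lipschitz at $p$, there is an open set $A\ni p$ with $\restricted{f}{A}$ Lipschitz, say with constant $L=\lipnorm{\restricted{f}{A}}<\infty$. Choose $r>0$ with $B(p,r)\subset A$. Because $x_k\to p$ and $y_k\to p$, for all large $k$ both $x_k$ and $y_k$ lie in $B(p,r)\subset A$, so
\[ \rho'(f(x_k),f(y_k)) \leq L\,\rho(x_k,y_k). \]
Combined with the defining inequality $\rho'(f(x_k),f(y_k)) > k\,\rho(x_k,y_k)$ and $\rho(x_k,y_k)>0$, this yields $k<L$ for all large $k$, which is absurd. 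Hence no such sequence exists and $f$ is Lipschitz.

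The main obstacle, and the only point requiring a little care, is the case $x\neq y$: one must rule it out honestly, and the clean way is to observe that a locally Lipschitz map is in particular continuous, hence its image under compactness is bounded in $N$, so $\rho'(f(x_k),f(y_k))$ cannot tend to infinity. The case $x=y$ is the heart of the argument and is handled directly by the definition of local Lipschitzness together with the convergence of both sequences into a single neighborhood. The conclusion $\lipnorm{f}\leq\infty$ then follows, and a finer bookkeeping of the constants (covering $M$ by finitely many Lipschitz neighborhoods via compactness) would even give an explicit Lipschitz constant, but for the qualitative statement the contradiction argument above suffices.
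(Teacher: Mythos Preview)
Your proof is correct and follows essentially the same contradiction argument as the paper: extract sequences with blowing-up slopes, pass to convergent subsequences by compactness, use continuity to force $x=y$, and then contradict local Lipschitzness at the common limit. The only cosmetic difference is that you rule out $x\neq y$ via boundedness of the image, whereas the paper uses continuity directly to get $\rho'(f(x_k),f(y_k))\to\rho'(f(x),f(y))<\infty$; both are equivalent here.
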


\begin{proof}
    Suppose that $f:M\to N$ is a locally Lipschitz function which is not Lipschitz. Then, for every $n\in\mathbb{N}$ there exists $x_{n},y_{n}\in M$ such that
    \[ \rho'(f(x_{n}),f(y_{n})) > n\rho(x_{n},y_{n}). \]
    Notice that for every $n\in\mathbb{N}$, $x_{n}\neq y_{n}$. Since $M$ is compact, we may assume without loss of generality that both $x_{n}$ and $y_{n}$ are convergent to $x$ and $y$, respectively. Since $f$ is continuous, we deduce from the inequality that both sequences converge to the same point $z\in M$. Then, thanks to $f$ being locally Lipschitz around $z$, there exists $L>0$ and $k\in\mathbb{N}$ such that for every $n\geq k$
    \[ n\rho(x_{n},y_{n}) < \rho'(f(x_{n}),f(y_{n})) \leq L\rho(x_{n},y_{n}), \]
    which implies $n<L$ for every $n\geq k$, a contradiction. Thus, $f$ is Lipschitz.
\end{proof}

\begin{remark}\label{unifloclip}
    Since every Lipschitz function is uniformly locally Lipschitz, Theorem \ref{locLip} implies that locally Lipschitz functions defined over $M$ are indeed uniformly locally Lipschitz whenever $M$ is compact. Then, the lemma implicitly states that $\Lip{M,N}=\locLip{M,N}$ whenever $M$ is compact.
\end{remark}

In the case $M=[0,1]$ with the usual metric, we can go a little further.

\begin{lemma}\label{loclip01}
    Let $(N,\rho')$ be a metric space and $g:[0,1]\to N$. Then, if $g$ is locally Lipschitz, $g$ is uniformly locally Lipschitz and $\lipnorm{g}=\loclipnorm{g}$.
\end{lemma}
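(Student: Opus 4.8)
The plan is to reduce everything to the one-dimensional, connected structure of $[0,1]$ and then chain local estimates along the interval. First I would dispose of the easy assertions. Since $[0,1]$ is compact, Theorem \ref{locLip} shows that a locally Lipschitz $g$ is in fact Lipschitz, and Remark \ref{unifloclip} then guarantees that $g$ is uniformly locally Lipschitz, so $\loclipnorm{g}$ is finite and both quantities in the claimed equality are well defined. Moreover, the general inequality recorded just before Theorem \ref{locLip} gives $\loclipnorm{g}\le\lipnorm{g}$, so the entire content of the lemma is the reverse inequality $\lipnorm{g}\le\loclipnorm{g}$.

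To prove this, I would set $L:=\loclipnorm{g}$ and fix $s<t$ in $[0,1]$ together with $\varepsilon>0$. For each $u\in[s,t]$, the definition of $\Lip{g,u}$ lets me choose an open neighbourhood $A_{u}$ of $u$ with $\lipnorm{\restricted{g}{A_{u}}}<\Lip{g,u}+\varepsilon\le L+\varepsilon$. The family $\{A_{u}\}_{u\in[s,t]}$ is an open cover of the compact set $[s,t]$, so the Lebesgue number lemma provides $\delta>0$ such that every subset of $[s,t]$ of diameter less than $\delta$ is contained in a single $A_{u}$. I would then take a partition $s=\tau_{0}<\tau_{1}<\cdots<\tau_{m}=t$ of mesh smaller than $\delta$, so that each subinterval $[\tau_{i-1},\tau_{i}]$ lies inside some $A_{u}$ and hence $\rho'(g(\tau_{i-1}),g(\tau_{i}))\le(L+\varepsilon)(\tau_{i}-\tau_{i-1})$. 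Summing these estimates and using the triangle inequality in $N$ telescopes the lengths:
\[ \rho'(g(s),g(t)) \le \sum_{i=1}^{m}\rho'(g(\tau_{i-1}),g(\tau_{i})) \le (L+\varepsilon)\sum_{i=1}^{m}(\tau_{i}-\tau_{i-1}) = (L+\varepsilon)(t-s). \]
Letting $\varepsilon\to0$ yields $\rho'(g(s),g(t))\le L\,|t-s|$, and taking the supremum over all $s\ne t$ gives $\lipnorm{g}\le L=\loclipnorm{g}$, which closes the chain of inequalities.

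The only genuinely delicate point is the passage from local to global: one must be able to partition the interval finely enough that every piece sits entirely within a neighbourhood on which the Lipschitz constant is controlled by $L+\varepsilon$. This is precisely where the connectedness of $[0,1]$ and the Lebesgue number lemma do the work, and where care is needed to index the cover by all of $[s,t]$ rather than by any fixed finite set chosen in advance. Everything else is the standard triangle-inequality chaining that makes the estimate additive along the interval, and I expect no further obstacle.
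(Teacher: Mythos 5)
Your proof is correct, and it reaches the key inequality $\lipnorm{g}\le\loclipnorm{g}$ by a genuinely different route than the paper. After the same easy reductions (Theorem \ref{locLip} plus Remark \ref{unifloclip}, and the trivial inequality $\loclipnorm{g}\le\lipnorm{g}$), the paper does \emph{not} invoke the Lebesgue number lemma. Instead, for $u<v$ it assigns to each $t\in[u,v]$ a radius $r_{t}>0$ at which the local bound $\Lip{g,t}+\varepsilon$ controls $\lipnorm{\restricted{g}{B(t,r_{t})}}$, builds the greedy chain $t_{1}=u$, $t_{i+1}=\min\{v,\,t_{i}+r_{t_{i}}/2\}$, and proves by contradiction that the chain reaches $v$ in finitely many steps: if not, the $t_{i}$ would increase to some limit $t$, a monotonicity-of-restriction argument gives $\liminf_{i} r_{t_{i}}\ge r_{t}/2>0$, and then $t-u=\frac{1}{2}\sum_{i} r_{t_{i}}$ would diverge. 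It then chains the estimates along $u=t_{1}<\dots<t_{k}=v$ exactly as you do, writing the bound as a convex combination $\sum_{i}\lambda_{i}(\Lip{g,t_{i}}+\varepsilon)\le\max_{i}\Lip{g,t_{i}}+\varepsilon$. Your appeal to the Lebesgue number lemma replaces that delicate termination argument with a single uniform scale $\delta$ for the cover $\{A_{u}\}_{u\in[s,t]}$, after which the telescoping is identical; since the paper's convex-combination bound collapses to the uniform bound $L+\varepsilon$ anyway, nothing is lost. Both proofs are, at bottom, the same compactness of the interval; yours is shorter and uses a standard off-the-shelf lemma, while the paper's self-contained chaining has the minor virtue of exhibiting an explicit finite chain whose steps are calibrated to the local radii $r_{t}$ rather than to a worst-case uniform $\delta$. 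You are also right to flag, as the genuinely delicate point, that the cover must be indexed by \emph{all} points of $[s,t]$, since each $A_{u}$ is produced from the definition of $\Lip{g,u}$ as an infimum and cannot be fixed in advance.
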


\begin{proof}
    Suppose that $g:[0,1]\to N$ is a locally Lipschitz function. We already know thanks to Theorem \ref{locLip} that $g$ is Lipschitz, since $[0,1]$ is compact. Moreover, considering Remark \ref{unifloclip} we have that $g$ is uniformly locally Lipschitz. We claim that
    \[ \lipnorm{g} = \loclipnorm{g}. \]
    Clearly, $\loclipnorm{g}\leq\lipnorm{g}$, so let us prove the inequality $\loclipnorm{g}\geq\lipnorm{g}$.  Let $u,v\in[0,1]$ with $u<v$, $\varepsilon>0$ and consider for every $t\in[u,v]$ the radius $r_{t}>0$ given by the supremum of radii $0<\delta<v-u$ such that
    \[ \Lip{g,t} \geq \lipnorm{\restricted{g}{B(t,\delta)}} - \varepsilon. \]
    Let $t_{1}=u$ and while $t_{i}\neq v$, define $t_{i+1}=\min\{ v, t_{i}+r_{t_{i}}/2 \}$. If this process is infinite, we see that the sequence $t_{i}$ would be strictly increasing and bounded. Let $t\in[u,v]$ be its limit. Then, if $t-t_{i}<\eta< r_{t}$, we see that $r_{t_{i}}\geq r_{t}-\eta$. Indeed, notice that if $s\in B(t_{i},r_{t}-\eta)$, then
    \[ |s-t|\leq |s-t_{i}|+|t_{i}-t| < r_{t}, \]
    that is $B(t_{i},r_{t}-\eta)\subset B(t,r_{t})$, which leads us to
    \[ \lipnorm{\restricted{g}{B(t_{i},r_{t}-\eta)}} \leq \lipnorm{\restricted{g}{B(t,r_{t})}} \leq \Lip{g,t} + \varepsilon, \]
    from which we deduce that $r_{t_{i}}\geq r_{t}-\eta$.
    
    Choosing $\eta=r_{t}/2$ we see that 
    \[ \liminf_{i\to\infty}{r_{t_{i}}}\geq \frac{r_{t}}{2}>0.\]
    But
    \[ t-u = \sum_{i=1}^{\infty} t_{i+1}-t_{i} = \frac{1}{2}\sum_{i=1}^{\infty} r_{t_{i}}. \]
    Since this sum is not convergent, we deduce that the process for obtaining the sequence $t_{i}$ eventually stops. We have then $u = t_{1}<\ldots<t_{k}=v$ such that $t_{i+1}\in B(t_{i},r_{t_{i}})$. We see that 
    \[ \frac{\rho'(g(u),g(v))}{v-u} \leq \sum_{i=1}^{k-1}\frac{\rho'(g(t_{i}),g(t_{i+1}))}{v-u} = \sum_{i=1}^{k-1} \frac{t_{i+1}-t_{i}}{v-u} \frac{\rho'(g(t_{i}),g(t_{i+1}))}{t_{i+1}-t_{i}} \]
    \[ \leq \sum_{i=1}^{k-1} \lambda_{i} \lipnorm{\restricted{g}{B(t_{i},r_{t_{i}})}} \leq \sum_{i=1}^{k-1} \lambda_{i} (\Lip{g,t_{i}}+\varepsilon), \]
    where $\lambda_{i} = (t_{i+1}-t_{i})/(v-u)>0$ and their sum is equal to 1. Since $g$ is uniformly locally Lipschitz, we deduce that
    \[ \frac{\rho'(g(u),g(v))}{v-u} \leq \varepsilon + \max_{i=1,\ldots,k} \Lip{g,t_{i}} \leq \varepsilon + \loclipnorm{g}, \]
    from which we get that $\lipnorm{g}\leq \loclipnorm{g}$, since the last inequality holds for every $u,v\in[0,1]$ and $\varepsilon>0$.
\end{proof}

Making use of these lemmas in an specific framework, we can find a broader class of metric spaces for which $\Lip{M,N}=\locLip{M,N}$. For a rectifiably-connected metric space $(M,\rho)$, the intrinsic (length) metric is defined as
\[ d_{\rho}(x,y) = \inf\{ \ell(\gamma) : \gamma\in \mathcal{C}([0,1];M), \gamma(0)=x, \gamma(1)=y \} \]
where
\[ \ell(\gamma) = \sup\left\{ \sum_{i=1}^{n} \rho(\gamma(t_{i-1}),\gamma(t_{i})) : 0=t_{0}<\ldots <t_{n} = 1 , n\in\mathbb{N} \right\} \]
It is well known that for every rectifiable path $\gamma:[0,1]\to M$ there exists an orientation-preserving reparametrization $\theta:[0,1]\to M$ such that $\theta$ has constant speed, that is, for every $u<v\in[0,1]$, the length of $\theta([u,v])$, denoted by $\ell_{uv}(\theta)$, satisfies $\ell_{uv}(\theta) = \ell(\theta)|v-u|$, which is given by the scaling of the length of arc parametrization of $\gamma$. Considering this, we will use directly Lipschitz paths whenever possible, that is, whenever we are dealing with parametrization-invariant properties. Noticing that length of arc parametrizations are also Lipschitz, we will also assume that parametrizations are Lipschitz when convenient. For more details on rectifiably-connected metric spaces, length spaces and the related definitions, we refer to \cite[Section 2.5]{IBB01}.

\begin{proposition}\label{unifloclip2}
    Let $(M,\rho)$ and $(N,\rho')$ be metric spaces, with $M$ rectifiably-connected. Then, if $\rho$ is equivalent to $d_{\rho}$, uniformly locally Lipschitz functions are Lipschitz. Moreover, for every $f\in\Lip{M,N}=\locLip{M,N}$, it holds $\loclipnorm{f}\leq\lipnorm{f}\leq C\loclipnorm{f}$, where $C$ is the equivalence constant for $\rho$ and $d_{\rho}$.
\end{proposition}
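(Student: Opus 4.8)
The plan is to reduce the statement to the one-dimensional situation already settled in Lemma \ref{loclip01} by composing $f$ with a nearly length-minimizing curve. First I would record that for any path $\gamma$ joining $x$ to $y$ one always has $\rho(x,y)\le\ell(\gamma)$, so that $\rho\le d_{\rho}$ holds unconditionally and the equivalence of $\rho$ and $d_{\rho}$ amounts to an inequality $d_{\rho}(x,y)\le C\rho(x,y)$ for some constant $C\ge 1$. Since $M$ is rectifiably-connected, given $x\ne y$ and $\varepsilon>0$ I can pick a rectifiable path $\gamma\colon[0,1]\to M$ with endpoints $x,y$ and $\ell(\gamma)\le d_{\rho}(x,y)+\varepsilon$, and then replace it by its constant-speed reparametrization $\theta$, which is Lipschitz, has the same image and length, and satisfies $\ell_{uv}(\theta)=\ell(\theta)\,|u-v|$ for all $u<v$.

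The heart of the argument is to show that the composition $g:=f\circ\theta\colon[0,1]\to N$ is uniformly locally Lipschitz with $\loclipnorm{g}\le\loclipnorm{f}\,\ell(\theta)$. Fix $t\in[0,1]$ and an open set $A\ni\theta(t)$ on which $\restricted{f}{A}$ is Lipschitz; continuity of $\theta$ makes $\theta^{-1}(A)$ an open neighbourhood of $t$, and for $u,v$ in an interval about $t$ contained in $\theta^{-1}(A)$ the constant-speed property gives
\[ \rho'(g(u),g(v))\le\lipnorm{\restricted{f}{A}}\,\rho(\theta(u),\theta(v))\le\lipnorm{\restricted{f}{A}}\,\ell_{uv}(\theta)=\lipnorm{\restricted{f}{A}}\,\ell(\theta)\,|u-v|. \]
Taking the infimum over such $A$ yields $\Lip{g,t}\le\Lip{f,\theta(t)}\,\ell(\theta)\le\loclipnorm{f}\,\ell(\theta)$, uniformly in $t$. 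I expect this transfer of the local Lipschitz constant through $\theta$, and in particular the bookkeeping that produces exactly the factor $\ell(\theta)$ from the constant-speed normalization, to be the main point requiring care.

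With this in hand the conclusion is immediate. Lemma \ref{loclip01} applies to $g$ on $[0,1]$ and gives $\lipnorm{g}=\loclipnorm{g}\le\loclipnorm{f}\,\ell(\theta)$, whence
\[ \rho'(f(x),f(y))=\rho'(g(0),g(1))\le\lipnorm{g}\le\loclipnorm{f}\,\ell(\theta)=\loclipnorm{f}\,\ell(\gamma)\le\loclipnorm{f}\bigl(d_{\rho}(x,y)+\varepsilon\bigr)\le\loclipnorm{f}\bigl(C\rho(x,y)+\varepsilon\bigr). \]
Letting $\varepsilon\to 0$ and ranging over $x\ne y$ shows that every uniformly locally Lipschitz $f$ is Lipschitz with $\lipnorm{f}\le C\loclipnorm{f}$; this also proves the inclusion $\locLip{M,N}\subseteq\Lip{M,N}$, so that the two classes coincide. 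Combining with the general inequality $\loclipnorm{f}\le\lipnorm{f}$ recorded above yields the chain $\loclipnorm{f}\le\lipnorm{f}\le C\loclipnorm{f}$, completing the proof.
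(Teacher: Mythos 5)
Your proposal is correct and takes essentially the same route as the paper: you transfer the local Lipschitz numbers through a constant-speed parametrization to obtain $\Lip{f\circ\theta,t}\leq\Lip{f,\theta(t)}\,\ell(\theta)$, invoke Lemma \ref{loclip01} on $[0,1]$, and conclude via $d_{\rho}\leq C\rho$. The only cosmetic difference is that you fix a nearly length-minimizing path in advance, whereas the paper bounds $\rho'(f(x),f(y))\leq\loclipnorm{f}\,\ell(\gamma)$ for an arbitrary constant-speed path and then takes the infimum over paths to reach $d_{\rho}(x,y)$.
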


\begin{proof}
    It suffices to prove the nontrivial inclusion $\locLip{M,N}\subset\Lip{M,N}$. Let $x,y\in M$ and $\gamma\in \mathcal{C}([0,1];M)$ a rectifiable path going from $x$ to $y$. As usual, we assume without loss of generality that $\gamma$ has constant speed, that is, $\ell_{st}(\gamma)=\ell(\gamma)|t-s|$ for every $s,t\in[0,1]$. We claim that $f\circ \gamma$ belongs to $\locLip{[0,1],N}$. Indeed, consider $t\in[0,1]$. Since $\gamma(t)\in M$ and $f$ is locally Lipschitz, for every $\varepsilon>0$ there exists an open neighborhood $A\subset M$ of $\gamma(t)$ such that for every $x,y\in A$
    \[ \rho'(f(x),f(y))\leq (\Lip{f,\gamma(t)}+\varepsilon)\rho(x,y). \]
    Then, for $u,v\in \gamma^{-1}(A)$
    \[ \rho'(f\circ\gamma(u),f\circ\gamma(v))\leq (\Lip{f,\gamma(t)}+\varepsilon)\rho(\gamma(u),\gamma(v)) \]
    \[ \leq(\Lip{f,\gamma(t)}+\varepsilon)\ell_{uv}(\gamma) = (\Lip{f,\gamma(t)}+\varepsilon)\ell(\gamma)|v-u|. \]
    From this, $\lipnorm{\restricted{f\circ\gamma}{\gamma^{-1}(A)}}\leq (\Lip{f,\gamma(t)}+\varepsilon)\ell(\gamma)$, which consequently leads to $\Lip{f\circ\gamma,t}\leq \Lip{f,\gamma(t)}\ell(\gamma)$. By virtue of Theorem \ref{locLip}, $f\circ\gamma$ is Lipschitz in $[0,1]$, that is
    \[ \rho'(f\circ\gamma(u),f\circ\gamma(v)) \leq \lipnorm{f\circ\gamma}|v-u|. \]
    In particular, for $u=0$ and $v=1$ we get that
    \[ \rho'(f(x),f(y)) \leq \lipnorm{f\circ\gamma} \]
    Thanks to Lemma \ref{loclip01} we have that $\lipnorm{f\circ\gamma}=\loclipnorm{f\circ\gamma}$. Then
    \[ \rho'(f(x),f(y)) \leq \loclipnorm{f\circ\gamma} = \sup_{t\in[0,1]} \Lip{f\circ\gamma,t} \leq \ell(\gamma)\sup_{t\in[0,1]} \Lip{f,\gamma(t)}. \]
    But since $f$ is uniformly locally Lipschitz, we see that
    \[ \sup_{t\in[0,1]} \Lip{f,\gamma(t)} \leq \sup_{x\in M} \Lip{f,x} = \loclipnorm{f}. \]
    Finally, since $\gamma$ was an arbitrary constant speed path going from $x$ to $y$, we get that for $C>0$ such that $d_{\rho}(x,y)\leq C \rho(x,y)$ for every $x,y\in M$
    \[ \rho'(f(x),f(y)) \leq \loclipnorm{f}d_{\rho}(x,y)\leq C\loclipnorm{f}\rho(x,y). \]
    Hence, $f$ is Lipschitz and $\loclipnorm{f}\leq\lipnorm{f}\leq C\loclipnorm{f}$.
\end{proof}

\subsection{Framework}\label{framework}

In the present article, some classical results on vector calculus and differential geometry will be necessary, which we recall in the following propositions.

\begin{proposition}\label{prop:cons}
    Let $\Omega\subset E$ be a non-empty open set and $g:\Omega\to E$ be a continuous vector field. Then, the following are equivalent
    \begin{enumerate}
        \item $g$ is conservative (i.e. $g$ has a $\mathcal{C}^{1}(\Omega)$ potential) 
        \item For every pair of rectifiable paths $\gamma_{1},\gamma_{2}:[0,1]\to\Omega$ with common endpoints
        \[ \int_{\gamma_{1}} g\cdot d\vec{r} = \int_{\gamma_{2}} g\cdot d\vec{r} \]
        \item For every rectifiable loop $\gamma:[0,1]\to\Omega$ (i.e. such that $\gamma(0)=\gamma(1)$)
        \[ \oint_{\gamma} g\cdot d\vec{r} = 0 \]
        \item For every pair of piecewise linear paths $\gamma_{1},\gamma_{2}:[0,1]\to\Omega$ with common endpoints
        \[ \int_{\gamma_{1}} g\cdot d\vec{r} = \int_{\gamma_{2}} g\cdot d\vec{r} \]
        \item For every piecewise linear loop $\gamma:[0,1]\to\Omega$
        \[ \oint_{\gamma} g\cdot d\vec{r} = 0 \]
    \end{enumerate}
\end{proposition}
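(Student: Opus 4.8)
The plan is to establish the five conditions through a single cycle of implications, $(1)\Rightarrow(2)\Rightarrow(3)\Rightarrow(5)\Rightarrow(4)\Rightarrow(1)$, taking advantage of the fact that piecewise linear loops and paths are particular instances of rectifiable ones, so that two of the links come essentially for free.

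For $(1)\Rightarrow(2)$ I would invoke the fundamental theorem of calculus for line integrals: recalling that the line integral of a continuous field along a rectifiable curve is well defined (e.g. via a constant-speed reparametrization, as discussed in Section \ref{section2}), if $g=\nabla f$ with $f\in\mathcal{C}^{1}(\Omega)$ then $\int_{\gamma} g\cdot d\vec{r} = f(\gamma(1))-f(\gamma(0))$ for every rectifiable path $\gamma$, so the value depends only on the endpoints and any two paths sharing them give the same integral. For $(2)\Rightarrow(3)$, given a rectifiable loop $\gamma$ with $\gamma(0)=\gamma(1)$, I would split it at an interior parameter $s$ into $\alpha=\restricted{\gamma}{[0,s]}$ and $\beta=\restricted{\gamma}{[s,1]}$; reversing $\beta$ yields a path with the same endpoints as $\alpha$, whence $(2)$ gives $\int_{\alpha} g\cdot d\vec{r}=-\int_{\beta} g\cdot d\vec{r}$ and therefore $\oint_{\gamma} g\cdot d\vec{r}=0$.

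The implication $(3)\Rightarrow(5)$ is immediate, since every piecewise linear loop is in particular a rectifiable loop. For $(5)\Rightarrow(4)$ I would run the splitting argument in reverse: given piecewise linear paths $\gamma_{1},\gamma_{2}$ with common endpoints, the concatenation of $\gamma_{1}$ with the reversal of $\gamma_{2}$ is again a piecewise linear loop, so $(5)$ forces $\int_{\gamma_{1}} g\cdot d\vec{r}=\int_{\gamma_{2}} g\cdot d\vec{r}$.

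The only substantial step is $(4)\Rightarrow(1)$, which is where I expect the main work to lie. Since $\Omega$ is open in the finite-dimensional space $E$, it is locally polygonally connected, so its connected components are open and polygonally connected; I would fix a base point in each component and set $f(x):=\int_{\sigma_{x}} g\cdot d\vec{r}$ for any piecewise linear path $\sigma_{x}$ from the base point to $x$, the hypothesis $(4)$ being exactly what guarantees that $f$ is well defined. To identify $\nabla f$, for $x\in\Omega$ I would choose $r>0$ with $B(x,r)\subset\Omega$ and, for $y\in B(x,r)$, append the straight segment $[x,y]$ to a path reaching $x$, obtaining $f(y)-f(x)=\int_{0}^{1} g(x+t(y-x))\cdot(y-x)\,dt$. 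Subtracting $g(x)\cdot(y-x)$ and using the continuity of $g$ gives $f(y)-f(x)-g(x)\cdot(y-x)=o(\|y-x\|)$, so $f$ is differentiable at $x$ with $\nabla f(x)=g(x)$, and continuity of $g$ upgrades this to $f\in\mathcal{C}^{1}(\Omega)$. The delicate points are confirming that each component is genuinely polygonally connected (so that $f$ is defined throughout $\Omega$) and controlling the line-integral remainder uniformly in the direction $y-x$, which is precisely where the continuity of $g$ is needed.
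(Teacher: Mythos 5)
Your proof is correct and takes essentially the same approach as the paper, which organizes the identical standard arguments as the cycle $(1)\Rightarrow(2)\Rightarrow(3)\Rightarrow(1)$ (deferred to \cite[Chapter 9]{WT03}) plus the trivial link $(2)\Rightarrow(4)$ and the analogous steps $(4)\Rightarrow(5)$ and $(5)\Rightarrow(1)$. Your single cycle $(1)\Rightarrow(2)\Rightarrow(3)\Rightarrow(5)\Rightarrow(4)\Rightarrow(1)$ merely relocates the trivial restriction link and supplies in full the details (fundamental theorem along a constant-speed reparametrization, loop-splitting, and the potential built from polygonal connectedness of components with the $o(\|y-x\|)$ remainder estimate) that the paper only sketches or cites.
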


The chain of implications of $(1)\Rightarrow(2)\Rightarrow(3)\Rightarrow(1)$ can be widely found in the literature, involving simple techniques of vector calculus (see e.g. \cite[Chapter 9]{WT03}). The implication $(2)\Rightarrow(4)$ is trivial, while $(4)\Rightarrow(5)$ and $(5)\Rightarrow(1)$ are obtained in the same way as $(2)\Rightarrow(3)$ and $(3)\Rightarrow(1)$, respectively.

From differential geometry, in order to achieve our goal we need a specific result which is formulated in terms of $1$-forms, but can be readily translated to vector fields. More precisely, a particular version of Poincaré's Lemma generalizes a well known result for $\mathcal{C}^{1}$ conservative vector fields in $\mathbb{R}^{3}$, which states that if $\Omega\subset\mathbb{R}^{3}$ is a simply connected domain, then a $\mathcal{C}^{1}$ vector field is conservative if and only if it is irrotational (it has null curl). We recall the aforementioned version of Poincaré's Lemma in terms of vector fields, which will be useful in the sequel. For the details, we refer to \cite{dC94}.

\begin{lemma}[\textbf{Poincaré for $1$-forms}]\label{poincare}
    Let $\Omega\subset E$ be a simply connected domain, and $g:\Omega\to E$ a $\mathcal{C}^{1}$ vector field. Then
    \[ g \text{ is conservative} \iff \forall 1\leq i,j \leq n, \quad \partial_{i}g_{j} = \partial_{j}g_{i} \]
\end{lemma}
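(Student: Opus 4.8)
The plan is to establish the two implications separately. The forward implication $(\Rightarrow)$ is an immediate consequence of Schwarz's theorem on the equality of mixed partials, while the converse $(\Leftarrow)$ is the substantive content of Poincaré's lemma and is precisely where simple connectivity is used.

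For $(\Rightarrow)$, suppose $g$ is conservative, so that $g=\nabla f$ for some $\mathcal{C}^{1}$ potential $f$. Since $g$ is itself $\mathcal{C}^{1}$ and $g_{j}=\partial_{j}f$, every second-order partial $\partial_{i}\partial_{j}f=\partial_{i}g_{j}$ exists and is continuous, so $f\in\mathcal{C}^{2}(\Omega)$. Schwarz's theorem then yields $\partial_{i}g_{j}=\partial_{i}\partial_{j}f=\partial_{j}\partial_{i}f=\partial_{j}g_{i}$ for all $1\leq i,j\leq n$, as desired.

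For $(\Leftarrow)$, assume $\partial_{i}g_{j}=\partial_{j}g_{i}$ for all $i,j$. By Proposition \ref{prop:cons} it suffices to prove that $\oint_{\gamma}g\cdot d\vec{r}=0$ for every (rectifiable, equivalently smooth after reparametrization) loop $\gamma$ in $\Omega$. First I would reduce to smooth loops by a standard smoothing argument, so that the circulation depends continuously on small perturbations of the loop. Since $\Omega$ is simply connected, any such loop admits a homotopy $H\colon[0,1]^{2}\to\Omega$, which can be taken smooth, contracting $\gamma=H(0,\cdot)$ to a constant loop $H(1,\cdot)$ while keeping endpoints identified, i.e. $H(s,0)=H(s,1)$ for all $s$. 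Setting $I(s):=\oint_{H(s,\cdot)}g\cdot d\vec{r}$, the key computation is to differentiate under the integral sign and integrate by parts in the loop variable; the resulting integrand is a linear combination of the terms $\partial_{i}g_{j}-\partial_{j}g_{i}$, which vanish by hypothesis, so $I'(s)=0$. Hence $I$ is constant and $\oint_{\gamma}g\cdot d\vec{r}=I(0)=I(1)=0$, the latter being the circulation over a constant loop. By Proposition \ref{prop:cons} this shows $g$ is conservative. Equivalently, since a smooth null-homotopy of $\gamma$ furnishes a smooth $2$-chain in $\Omega$ with boundary $\gamma$, one may invoke Stokes' theorem to write $\oint_{\gamma}g\cdot d\vec{r}=\int d\omega=0$, where $\omega=\sum_{j}g_{j}\,dx_{j}$ and the closedness $d\omega=0$ encodes the hypothesis.

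The main obstacle is the homotopy-invariance step: passing from a merely continuous (rectifiable) loop and the merely continuous null-homotopy guaranteed by simple connectivity to the smooth data needed to run either the differentiation-under-the-integral computation or Stokes' theorem. This requires an approximation argument ensuring that both the loop and the homotopy can be taken smooth without altering the value of the circulation, together with the justification of differentiating $I(s)$ under the integral sign. Once this regularity bookkeeping is in place, the algebraic cancellation provided by $\partial_{i}g_{j}=\partial_{j}g_{i}$ is immediate.
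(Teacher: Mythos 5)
The paper does not actually prove this lemma: it is quoted as a known version of Poincar\'e's lemma with a citation to do Carmo's \emph{Differential Forms and Applications}, so there is no internal proof to compare against. Your proposal is the standard homotopy-invariance argument, which is essentially the proof in the cited reference, and both implications are sound in outline: the forward direction via Schwarz is complete as written (your observation that $g\in\mathcal{C}^{1}$ forces $f\in\mathcal{C}^{2}$ is exactly the right justification), and the converse correctly identifies circulation over null-homotopic loops as the quantity to kill. One remark on the $I'(s)=0$ computation: the integration by parts in $t$ produces the term $g(H)\cdot\partial_{s}\partial_{t}H$ and uses equality of the mixed partials of $H$, so you need $H\in\mathcal{C}^{2}$, not merely smooth-in-each-variable; since you obtain $H$ by mollification this is harmless, but it should be said. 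Also note the boundary term $\left[g(H)\cdot\partial_{s}H\right]_{t=0}^{t=1}$ vanishes because $H(s,0)=H(s,1)$ for all $s$ implies $\partial_{s}H(s,0)=\partial_{s}H(s,1)$; this is where the ``loops throughout the homotopy'' condition is genuinely used.

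The one place where your sketch leaves a real gap is the regularity bookkeeping you yourself flag: passing from a continuous null-homotopy to a smooth one while preserving the loop condition and controlling the change in circulation is not automatic, and the naive fix (``nearby loops have nearby circulation'') threatens circularity, since comparing the circulations of two uniformly close loops is itself a homotopy-invariance statement. Two standard repairs are available. First, you can shrink the problem using the paper's own Proposition \ref{prop:cons}: by the equivalence $(5)\Leftrightarrow(1)$ it suffices to treat piecewise linear loops, which are already Lipschitz and whose circulation is elementary, so only the homotopy needs smoothing; mollifying $H$ in both variables with $t$-periodic boundary data keeps the loop condition, and the initial loop can be matched by a straight-line homotopy that stays in $\Omega$ because the compact image of $H$ has positive distance to $\Omega^{c}$ --- and on that thin tube the straight-line homotopy argument can be justified locally. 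Second, and cleaner, one can avoid smoothing altogether: the symmetry hypothesis gives an elementary \emph{local} potential on any ball $B(x_{c},r)\subset\Omega$, namely $f(x)=\int_{0}^{1}g\bigl(x_{c}+\tau(x-x_{c})\bigr)\cdot(x-x_{c})\,d\tau$, whose gradient is $g$ precisely because $\partial_{i}g_{j}=\partial_{j}g_{i}$; one then defines the integral along a merely continuous homotopy via these local primitives on a grid fine enough (Lebesgue number) that each small square maps into a single ball, and checks invariance square by square --- this is the same device used in complex analysis to integrate holomorphic functions along continuous paths. Either route closes the gap; without one of them, the ``standard smoothing argument'' you invoke is doing nontrivial unacknowledged work.
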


It is well known that whenever $g:\Omega\to E$ is a $\mathcal{C}^{1}$ conservative vector field, its Jacobian matrix is symmetric, that is
\[ \partial_{i}g_{j}=\partial_{j}g_{i} \text{ in } \Omega, \]
since it coincides with the Hessian matrix of any potential for $g$. The converse is false in general, which is clear from the following classical example

\begin{example}
    Consider the vector field $g:\mathbb{R}^{2}\setminus\{0\}\to\mathbb{R}^{2}$ given by
    \[ g(x,y) = \left( \frac{-y}{x^{2}+y^{2}} , \frac{x}{x^{2}+y^{2}} \right), \]
    which is clearly $C^{1}$ in its domain and satisfies
    \[ \partial_{1}g_{2}(x,y) = \frac{y^{2}-x^{2}}{(x^{2}+y^{2})^{2}} = \partial_{2}g_{1}(x,y) \]
    But its integral over the path $\gamma(t)=(\cos(2\pi t),\sin(2\pi t))$ is given by
    \[ \int g\cdot d\gamma = 2\pi \int_{0}^{1} \sin^{2}(2\pi t)+\cos^{2}(2\pi t) dt = 2\pi \neq 0, \]
    which shows that this field is not conservative. Notice that all of this still holds if we replace the domain by $\mathbb{R}^{2}\setminus \overline{B}(0,1/2)$ and in that case $g$ is bounded.
\end{example}

Next, we state the following result on measure theory which will come in handy for some of the proofs below.

\begin{lemma}\cite[\textbf{Lemma 8.3, p. 161}]{L93}\label{restae}
    Let $(X,\Sigma_{X},\mu),(Y,\Sigma_{Y},\nu)$ be measure spaces and consider $Z$ be a set of $(\mu\otimes\nu)$-measure $0$ in $X\times Y$. Then, for almost all $x\in X$ we have $\nu(Z_{x})=0$, where
    \[ Z_{x} = \{ y\in Y : (x,y)\in Z \}. \]
\end{lemma}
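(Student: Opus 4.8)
The plan is to reduce the statement to Tonelli's theorem applied to the indicator function of $Z$. Since $Z$ has $(\mu\otimes\nu)$-measure zero it is, in particular, a measurable subset of $X\times Y$, so $\mathds{1}_{Z}$ is a nonnegative measurable function and Tonelli's theorem applies with no integrability hypothesis beyond measurability. First I would write
\[ 0=(\mu\otimes\nu)(Z)=\int_{X\times Y}\mathds{1}_{Z}\,d(\mu\otimes\nu), \]
and observe that, by Tonelli's theorem, this equals the iterated integral
\[ \int_{X}\left(\int_{Y}\mathds{1}_{Z}(x,y)\,d\nu(y)\right)d\mu(x). \]
The inner integral is exactly the measure of the section: for every $x$ for which $Z_{x}$ is $\nu$-measurable one has $\int_{Y}\mathds{1}_{Z}(x,y)\,d\nu(y)=\nu(Z_{x})$, and Tonelli's theorem guarantees both that $Z_{x}$ is measurable for $\mu$-almost every $x$ and that the resulting function $\varphi(x):=\nu(Z_{x})$ is $\mu$-measurable.

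Thus I would have produced a nonnegative measurable function $\varphi$ with $\int_{X}\varphi\,d\mu=0$, and it remains only to conclude that $\varphi=0$ holds $\mu$-almost everywhere. This is a standard fact: for each $k\in\mathbb{N}$ the set $A_{k}:=\{x\in X:\varphi(x)>1/k\}$ satisfies $\mu(A_{k})\leq k\int_{X}\varphi\,d\mu=0$, and since $\{x:\varphi(x)>0\}=\bigcup_{k}A_{k}$ is a countable union of null sets it is itself $\mu$-null. Hence $\nu(Z_{x})=\varphi(x)=0$ for $\mu$-almost every $x\in X$, which is exactly the desired conclusion.

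The only delicate point in this argument is measure-theoretic rather than analytic: one must ensure that the sections $Z_{x}$ are $\nu$-measurable for $\mu$-almost every $x$ and that $x\mapsto\nu(Z_{x})$ is $\mu$-measurable, so that the iterated integral above is meaningful. If $Z$ lies in the product $\sigma$-algebra $\Sigma_{X}\otimes\Sigma_{Y}$ then every section is automatically measurable and no difficulty arises; if $Z$ lies only in the completion of $\Sigma_{X}\otimes\Sigma_{Y}$ with respect to $\mu\otimes\nu$, one invokes the completed form of Tonelli's theorem, which requires the measures to be $\sigma$-finite and which is precisely the content of the cited \cite[Lemma 8.3]{L93}. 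Granting this, the proof is the direct Tonelli computation carried out above, and I expect this measurability bookkeeping to be the main (indeed the only) obstacle.
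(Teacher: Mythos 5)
The paper does not prove this lemma at all: it is quoted directly from Lang \cite{L93} (Lemma 8.3 there), so there is no internal proof to compare yours against. Judged on its own, your Tonelli computation is correct and is the standard textbook argument, and in the only place the paper actually invokes the lemma (sections of Lebesgue-null sets in a hyper-rectangle $H=H'\times[a_{i},b_{i}]$) the underlying measures are $\sigma$-finite and complete, so every hypothesis you need is satisfied there.

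Two refinements are worth making. First, $\sigma$-finiteness is needed not only in the completed case, as your last paragraph suggests, but for Tonelli as such: without it the function $x\mapsto\nu(Z_{x})$ need not be $\mu$-measurable and the product measure is not even uniquely determined, whereas the statement as quoted allows arbitrary measure spaces. For that level of generality one argues directly from the Carath\'eodory construction: cover $Z$ by rectangles $A_{i}\times B_{i}$ with $\sum_{i}\mu(A_{i})\nu(B_{i})\leq\varepsilon$, observe that $\nu^{*}(Z_{x})\leq\sum_{i:\,x\in A_{i}}\nu(B_{i})=:f_{\varepsilon}(x)$ with $\int_{X}f_{\varepsilon}\,d\mu\leq\varepsilon$, apply Chebyshev to get $\mu^{*}(\{x:\nu^{*}(Z_{x})>\delta\})\leq\varepsilon/\delta$, and let $\varepsilon\to0$, then take a union over $\delta=1/k$. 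This avoids Tonelli altogether, and it is the natural route in Lang's own development, where this lemma is a stepping stone \emph{toward} Fubini--Tonelli, so that quoting a completed Tonelli theorem would be circular inside that framework. Second, your closing remark that the completed form of Tonelli ``is precisely the content of the cited lemma'' is circular as written, since the cited lemma is the statement being proved; the clean fix for the completion issue is a measurable hull: choose $W\in\Sigma_{X}\otimes\Sigma_{Y}$ with $Z\subseteq W$ and $(\mu\otimes\nu)(W)=0$, run your computation on the indicator of $W$ (whose sections are honestly measurable for every $x$), conclude $\nu(W_{x})=0$ for $\mu$-almost every $x$, and finish by noting $Z_{x}\subseteq W_{x}$.
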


%----------------------------------------------------------

\section{Lipschitz-free space for domains}\label{section3}

%------------------------------------------------------------
\subsection{Identification of Lipschitz functions over a domain}\label{Lip0}

In the following, we will show that (in the appropriate sense) conservative essentially bounded vector fields have Lipschitz potential. Recall that a continuous vector field $g$ is said to be conservative if it has a potential, that is, there exists a $\mathcal{C}^{1}$ scalar field $f$ such that $\nabla f = g$.

Our goal is to weaken this condition in order to treat with non-continuous vector fields. Since we want to deal with Lipschitz functions, which will play the role of scalar fields, our definition will necessarily impose vector fields to be the gradients of Lipschitz functions. To this end, first recall the following well known result for Lipschitz functions.

\begin{theorem}\cite[\textbf{Section 5.8 Theorem 6 (Rademacher)}]{E10}
    Consider $f:\Omega\subset\mathbb{R}^{n}\to\mathbb{R}^{m}$ a locally Lipschitz function. Then, the partial derivatives $\partial_{i}f_{j}(x)$ exists almost everywhere in $\Omega$ with respect to the Lebesgue measure and they are Lebesgue-measurable functions. Moreover, they coincide with the partial derivatives of $f$ in the sense of distributions.
\end{theorem}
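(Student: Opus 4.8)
The plan is to establish the three assertions — the almost-everywhere existence of the partials $\partial_{i}f_{j}$, their measurability, and their coincidence with the distributional partials — one at a time, reducing first to the scalar case $m=1$ by arguing componentwise on each $f_{j}$, and then to a single coordinate direction $e_{i}$ by symmetry. Throughout, local Lipschitzness is enough, since every assertion is local and one may work on a fixed compact subset of $\Omega$ on which $f$ is genuinely Lipschitz.

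For the existence, I would slice $\Omega$ by lines parallel to $e_{i}$: writing $x=(x',x_{i})$ with $x'\in\mathbb{R}^{n-1}$, the map $x_{i}\mapsto f(x',x_{i})$ is Lipschitz on each interval of the slice, hence absolutely continuous, and therefore differentiable at $\mathcal{L}^{1}$-almost every $x_{i}$ by Lebesgue's differentiation theorem for monotone (equivalently, absolutely continuous) functions. The set $N$ on which $\partial_{i}f$ fails to exist is measurable, being the set where the $\liminf$ and $\limsup$ of the difference quotients disagree; since each slice $N_{x'}$ is $\mathcal{L}^{1}$-null, the Tonelli theorem gives $\mathcal{L}^{n}(N)=\int_{\mathbb{R}^{n-1}}\mathcal{L}^{1}(N_{x'})\,dx'=0$. (This is the Fubini-type slicing of null sets, complementary to Lemma \ref{restae}.)

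For the measurability I would exhibit $\partial_{i}f$ as a pointwise a.e. limit of continuous functions along $t=1/k\to0$,
\[ \partial_{i}f(x)=\lim_{k\to\infty}\frac{f(x+e_{i}/k)-f(x)}{1/k}; \]
each difference quotient is continuous, so the limit is Lebesgue-measurable wherever it exists, which by the previous step is almost everywhere. For the identification with the distributional derivative I would test against an arbitrary $\phi\in\mathcal{D}(\Omega)$ and pass to the limit in the discrete integration-by-parts identity
\[ \int_{\Omega}\frac{f(x+e_{i}/k)-f(x)}{1/k}\,\phi(x)\,dx = -\int_{\Omega}f(x)\,\frac{\phi(x)-\phi(x-e_{i}/k)}{1/k}\,dx, \]
valid for $k$ large enough that the relevant translates of $\supp\phi$ remain in a fixed compact subset of $\Omega$. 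On the right, $\tfrac{\phi(x)-\phi(x-e_{i}/k)}{1/k}\to\partial_{i}\phi$ uniformly; on the left, the difference quotients are bounded by a Lipschitz constant for $f$ on that compact set and converge a.e. to $\partial_{i}f$, so dominated convergence applies. Letting $k\to\infty$ yields $\int_{\Omega}\partial_{i}f\,\phi=-\int_{\Omega}f\,\partial_{i}\phi$, which is exactly the assertion that the a.e.-defined $\partial_{i}f$ is the distributional partial derivative.

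The genuinely substantive ingredient — and the only real obstacle — is the one-dimensional differentiability result underlying the slicing step, namely that a Lipschitz function of one real variable is differentiable almost everywhere; this is the measure-theoretic heart of the matter and rests on Lebesgue's theorem on differentiation of monotone functions. Everything else is routine: the componentwise and coordinatewise reductions, the Tonelli passage from null slices to a null set, and the dominated-convergence limit in the weak formulation. I note that the full Rademacher theorem asserts total differentiability a.e., which is strictly stronger; the statement here requires only the existence of the partials, so the directional-to-total upgrade (the technically delicate part of the full theorem) is not needed.
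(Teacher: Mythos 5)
Your argument is correct, but the comparison here is necessarily with the cited source rather than with an in-paper proof: the paper states this theorem as a quoted classical result from \cite{E10} and gives no proof of its own. Evans proves the genuinely stronger Rademacher theorem --- total differentiability almost everywhere --- by first establishing a.e.\ existence of each directional derivative $D_v f$ (via the same one-dimensional Lipschitz/absolute-continuity plus Fubini--Tonelli slicing you use), then identifying $D_v f = v\cdot \nabla f$ a.e.\ through precisely your difference-quotient and dominated-convergence computation against test functions, and finally performing the delicate upgrade over a countable dense set of directions to full differentiability. You correctly observe that this last step is superfluous for the statement as formulated (and as actually used in the paper, where only a.e.\ partial derivatives and their coincidence with the distributional ones feed into Proposition \ref{Tbounded} and Theorem \ref{main1}), so your slicing argument buys a shorter, self-contained proof whose only nontrivial input is Lebesgue's differentiation theorem for monotone functions; what it gives up is the total differential, which Evans's version provides but the paper never needs. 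Two details deserve a word in a polished write-up: the measurability of the exceptional set $N$, which you assert but should justify by noting that continuity of $f$ lets the $\limsup$ and $\liminf$ of the difference quotients be taken over rational increments, making the upper and lower partial derivatives Borel; and the fact that the quotient $k\bigl(f(x+e_i/k)-f(x)\bigr)$ is defined only where $x+e_i/k\in\Omega$, which is handled by working on a compact exhaustion of $\Omega$ --- both are routine and do not affect the soundness of the proof.
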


From now on, whenever $\Omega\subset E$ is a domain, we will understand that $\Omega$ is endowed with the metric $\rho$ induced by the norm, and we will denote by $M$ the metric space given by $\Omega$ endowed with its intrinsic metric $d$.

Recall that since $\Omega$ is open, $\rho$ and $d$ coincide locally in $\Omega$, thanks to the fact that $E$ is locally convex. This shows that every Lipschitz function over $M$ is differentiable a.e., since these functions are locally Lipschitz with respect to $\rho$. Using this fact, we consider from now on the gradient operator on $\LipO{M}$, that is, $T:\LipO{M}\to L^{\infty}(\Omega;E^{*})$ given by $Tf:=\nabla f$.

\begin{proposition}\label{Tbounded}
    The operator $T:\LipO{M}\to L^{\infty}(\Omega;E^{*})$ given by the gradient of $f$, $Tf:=\nabla f$, is well defined and is an injective bounded linear operator.
\end{proposition}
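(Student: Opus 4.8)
The plan is to verify the four properties in turn, with linearity being immediate from the linearity of the distributional gradient. For well-definedness and boundedness I would first exploit the local coincidence of $d$ and $\rho$: since $\Omega$ is open and $E$ is finite-dimensional (hence locally convex), every $f\in\LipO{M}$ is locally Lipschitz with respect to $\rho$ with local constants bounded by $\lipnorm{f}$. Rademacher's theorem then guarantees that $f$ is differentiable almost everywhere with measurable partial derivatives agreeing with the distributional ones, so $Tf=\nabla f$ is a well-defined measurable $E^{*}$-valued map. To control its norm I would argue pointwise: at a point $x$ of differentiability, for every unit vector $h$ the quantity $\nabla f(x)(h)=\lim_{t\to 0}(f(x+th)-f(x))/t$ has modulus at most $\lipnorm{f}$, because $\rho(x+th,x)=t$ and $f$ is locally $\lipnorm{f}$-Lipschitz for $\rho$. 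Taking the supremum over $h$ yields $\|\nabla f(x)\|_{E^{*}}\le\lipnorm{f}$ almost everywhere, so $\nabla f\in L^{\infty}(\Omega;E^{*})$ and $\|T\|\le 1$.

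The remaining and main task is injectivity: I would show that $\nabla f=0$ almost everywhere forces $f\equiv 0$. Since $f(x_{0})=0$, it suffices to prove that $f$ is constant on the connected set $\Omega$. I would connect two arbitrary points of $\Omega$ by a polygonal path (possible because an open connected subset of $E$ is polygonally connected) and show that $f$ does not vary along a well-chosen such path. On any line segment, the restriction of $f$ is Lipschitz in one variable, hence absolutely continuous, and the fundamental theorem of calculus gives that the increment of $f$ equals the integral of the directional derivative $\nabla f\cdot(\text{direction})$ along the segment; if $\nabla f$ vanishes at almost every point of the segment, this increment is zero.

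The central obstacle is that $\nabla f=0$ holds off a set $Z$ of $n$-dimensional Lebesgue measure zero, whereas a segment is itself $(n-1)$-dimensionally negligible, so one cannot directly conclude that $\nabla f$ vanishes along a prescribed segment. This is exactly where Lemma \ref{restae} enters: writing $E$ in coordinates adapted to a coordinate direction, the lemma shows that for almost every line parallel to that direction the trace of $Z$ on the line has one-dimensional measure zero. I would therefore build the connecting polygonal path out of segments parallel to the coordinate axes lying on such \emph{good} lines, using the openness of $\Omega$ to perturb each vertex slightly so that every segment of the path meets $Z$ in a set of linear measure zero. Along each such segment the increment of $f$ is then zero by the previous step, whence $f$ takes the same value at both endpoints. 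As the two original points were arbitrary, $f$ is constant, and the normalization $f(x_{0})=0$ gives $f\equiv 0$, establishing injectivity and completing the proof.
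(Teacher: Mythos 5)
Your treatment of well-definedness, linearity and boundedness is essentially the paper's own: you use the local coincidence of $d$ and $\rho$ on the open set $\Omega$ to see that $f\in\LipO{M}$ is locally Lipschitz for $\rho$, invoke Rademacher's theorem, and derive the pointwise estimate $|\langle\nabla f(x),h\rangle|\le\lipnorm{f}\|h\|$ at differentiability points from $d(x,x+th)=t\|h\|$ for small $t$, giving $\|Tf\|_{\infty}\le\lipnorm{f}$. Where you genuinely diverge is injectivity: the paper dispatches it in one line (``$\nabla f=0$ a.e.\ and $\Omega$ connected imply $f$ constant''), whereas you actually prove this fact, via polygonal connectedness, absolute continuity of $f$ restricted to segments, and Lemma \ref{restae} to select axis-parallel lines meeting the null set $Z$ (points of non-differentiability together with points where $\nabla f\neq 0$) in linear measure zero; on such a \emph{good} line the one-dimensional derivative of the restriction agrees a.e.\ with the vanishing directional derivative, so the increment is zero. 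This is a legitimate, self-contained route that fills in exactly the step the paper leaves implicit. An alternative, closer in spirit to the machinery the paper builds later, is mollification: $\nabla(f*u_{k})=(\nabla f)*u_{k}=0$ on $\Omega_{k}$, so $f*u_{k}$ is locally constant there, and $f*u_{k}\to f$ locally uniformly, whence $f$ is locally constant and, by connectedness, constant.

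One detail in your injectivity step needs repair. When you ``perturb each vertex slightly'' so that every segment lies on a good line, the endpoints of the path move as well: Lemma \ref{restae} only guarantees that \emph{almost every} line parallel to a given coordinate direction is good, and the particular lines through the fixed points $x$ and $y$ may be bad, so you cannot in general begin the chain exactly at $x$. What your argument literally yields is $f(x')=f(y')$ for all $x'$, $y'$ ranging over full-measure (hence dense) subsets of small balls around $x$ and $y$. The gap is closed by continuity, which you already have: $f$ is Lipschitz for $d$, and $d$ agrees with $\rho$ locally, so $f$ is continuous on $\Omega$; letting $x'\to x$ and $y'\to y$ gives $f(x)=f(y)$. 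With that one additional sentence the proof is complete.
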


\begin{proof}
    Thanks to Rademacher theorem and the fact that the spaces $\LipO{M}$ and $\locLipO{M}$ functions coincide thanks to Proposition \ref{unifloclip2} ($M$ is endowed with its intrinsic distance), we deduce that $\nabla f$ exists a.e. in $\Omega$. Moreover, if $x\in\Omega$ is a differentiability point of $f$ and $h\in E$, then
    \[ |\langle \nabla f(x),h \rangle| = \left| \lim_{t\to 0^{+}} \frac{f(x+th)-f(x)}{t} \right| \]
    \[ = \lim_{t\to 0^{+}} \frac{|f(x+th)-f(x)|}{t} \leq \limsup_{t\to 0^{+}} \frac{\lipnorm{f}d(x,x+th)}{t}. \]
    But since $\Omega$ is open, $d(x,x+th)=\|x+th-x\|=t\|h\|$ for every $t>0$ sufficiently close to $0$, which finally leads to $|\langle \nabla f(x),h \rangle| \leq \lipnorm{f}\|h\|$. From this, $\|\nabla f(x)\|_{*}\leq\lipnorm{f}$ a.e. in $\Omega$, which implies that $\nabla f \in L^{\infty}(\Omega;E^{*})$, with $\|Tf\|_{\infty}\leq \lipnorm{f}$. Since the differential operator $\nabla$ is linear, we deduce that $T$ is well defined and is a bounded linear operator.

    For the injectivity of $T$, suppose that $Tf = 0$, that is, $\nabla f = 0$ a.e. in $\Omega$. Since $\Omega$ is connected, $f$ must be constant. But the only constant function in $\LipO{M}$ is $f=0$. Hence, $T$ is injective.
\end{proof}

Proposition \ref{Tbounded} shows that $T$ is a bijective bounded linear operator between $\LipO{M}$ and its image
\[ \textup{Im}(T) := \{ g\in L^{\infty}(\Omega;E^{*}) : (\exists f\in\LipO{M}) \quad g = \nabla f \}. \]

Our goal in the following is to establish a characterization of $\textup{Im}(T)$ and to show that the inverse of $T$ can be described in the expected natural way, that is, in terms of the appropriate integrals in order to recover $f$ from its gradient. Using the aforementioned characterization for $\textup{Im}(T)$ we will show that $T$ actually defines an isometry between $\LipO{M}$ and its image. In the following lines we give the details for recovering a smooth Lipschitz function (up to a constant) from its gradient.

\begin{example}
    Suppose that $f:\Omega\to \mathbb{R}$ is a $C^{\infty}(\Omega)$ function such that $\nabla f$ is bounded and fix $x_{0}\in\Omega$. If $\gamma:[0,1]\to\Omega$ is any Lipschitz path going from $x_{0}$ to some $x\in\Omega$, noticing that $f\circ\gamma:[0,1]\to\mathbb{R}$ is continuous and differentiable almost everywhere, we see that
    \[ f(\gamma(1))-f(\gamma(0)) = \int_{0}^{1} (f\circ\gamma)'(t) dt = \int_{0}^{1} \nabla f(\gamma(t)) \cdot \gamma'(t) dt, \]
    that is,
    \[ f(x) = f(x_{0}) + \int_{0}^{1} \nabla f(\gamma(t))\cdot\gamma'(t) dt. \]
    This shows that the obtained value for $f(x)$ is independent of $\gamma$, since the field $\nabla f$ is conservative by definition. Moreover, and considering that
    \[ f(y)-f(x) = \int_{0}^{1} \nabla f(\gamma_{xy}(t))\cdot \gamma'_{xy}(t) dt, \]
    where $\gamma_{xy}:[0,1]\to\Omega$ is any Lipschitz path going from $x$ to $y$, we deduce that
    \[ |f(y)-f(x)| \leq \int_{0}^{1} \|\nabla f(\gamma_{xy}(t))\|_{*} \|\gamma'_{xy}(t)\| dt \]
    \[ \leq \|\nabla f\|_{\infty} \int_{0}^{1}\|\gamma'_{xy}(t)\| dt = \|\nabla f\|_{\infty}\ell(\gamma_{xy}). \]
    Since this holds true for any Lipschitz path going from $x$ to $y$, we deduce that
    \[ |f(y)-f(x)| \leq \|\nabla f\|_{\infty} d(x,y), \]
    which shows that $\lipnorm{f}\leq\|Tf\|_{\infty}$ whenever $f$ is a $\mathcal{C}^{\infty}(\Omega)$ function with bounded gradient.
\end{example}

With this procedure we have stated two facts. First, when restricted to $\mathcal{C}^{\infty}(\Omega)$ functions vanishing at the base point $x_{0}$ of $M$ with bounded gradient, $T$ is in fact an isometry onto its image. Second, the inverse of this restriction of $T$ is defined in terms of integrals over curves in $\Omega$. These integrals are independent of the chosen path thanks to the fact that we are starting from the gradient of a $\mathcal{C}^{\infty}(\Omega)$ function, that is, we are dealing with a conservative vector field. 

Considering this idea, we claim that $T^{-1}$ is given by a similar formula, which takes into account the non-smoothness of the vector fields making use of the $\mathcal{C}^{\infty}$ case.

From now on, for every $k\in\mathbb{N}$ consider the set
\[ \Omega_{k} = \left\{ x\in\Omega : \textup{dist}(x,\Omega^{c})>\frac{1}{k} \right\} \]
and a positive mollifier $u:\mathbb{R}^{n}\to\mathbb{R}$, that is, a positive $\mathcal{C}^{\infty}$ function supported in the closed unit ball of $\mathbb{R}^{n}$ and such that $\int u(x)dx =1$. In this context, for every $k\geq 1$ the functions $u_{k}:\mathbb{R}^{n}\to\mathbb{R}$ are defined as $u_{k}(x):=k^{n}u(kx)$. Recall that a continuous vector field $H:U\subset\mathbb{R}^{n}\to\mathbb{R}^{n}$ defined over an open set is conservative in its domain if there exists a function $h:U\to\mathbb{R}$ such that $H=\nabla h$.

\begin{theorem}\label{main1}
    For the operator $T:\LipO{M}\to L^{\infty}(\Omega;E^{*})$ the following holds
    \begin{enumerate}[label=\roman*)]
        \item\label{main1_1} The range of $T$ is
        \[ Y := \{ g\in L^{\infty}(\Omega;E^{*}) : (\forall k\in\mathbb{N}) \quad g*u_{k}:\Omega_{k}\to\mathbb{R}^{n} \text{ is conservative} \}. \]
        \item\label{main1_2} The inverse operator $T^{-1}:Y\to\LipO{M}$ is given by
        \[ T^{-1}g(x) = \lim_{k\to\infty} \int_{0}^{1} (g*u_{k})(\gamma(t)) \cdot \gamma'(t) dt, \]
        where for each $x\in M$, $\gamma:[0,1]\to\Omega$ is any Lipschitz path going from the base point $x_{0}\in M$ to $x$. The choice of $\gamma$ is irrelevant thanks to the definition of $Y$.
    \end{enumerate}
    In particular, $T:\LipO{M}\to Y$ is an isometric isomorphism.
\end{theorem}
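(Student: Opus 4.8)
The plan is to establish the range identification in \ref{main1_1} by proving the two inclusions separately, to read off the isometry along the way, and to recover the explicit inverse of \ref{main1_2} from a compactness argument. Throughout write $h_{k}:=g*u_{k}$, a $\mathcal{C}^{\infty}$ vector field on $\Omega_{k}$ satisfying $\|h_{k}\|_{\infty}\leq\|g\|_{\infty}$ (the mollifier has unit mass), and note that $\Omega_{k}\nearrow\Omega$, so every compact subset of $\Omega$ and every fixed Lipschitz path in $\Omega$ lies in $\Omega_{k}$ for all large $k$. For the inclusion $\textup{Im}(T)\subset Y$ I would take $f\in\LipO{M}$, set $g=\nabla f$, and use that mollification commutes with the distributional gradient: since $\nabla f=g$ almost everywhere by Rademacher's theorem, $\nabla(f*u_{k})=(\nabla f)*u_{k}=h_{k}$ on $\Omega_{k}$. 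Thus each $h_{k}$ has the $\mathcal{C}^{\infty}$ potential $f*u_{k}$, hence is conservative on $\Omega_{k}$, which is exactly the condition $g\in Y$.

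The substantial direction is $Y\subset\textup{Im}(T)$. Given $g\in Y$, I would define $f_{k}(x):=\int_{0}^{1}h_{k}(\gamma(t))\cdot\gamma'(t)\,dt$, where $\gamma$ is a Lipschitz path from $x_{0}$ to $x$ inside $\Omega_{k}$. By the conservativeness of $h_{k}$ and Proposition \ref{prop:cons} this is independent of $\gamma$ on the connected component of $x_{0}$ in $\Omega_{k}$, and it is a potential of $h_{k}$ with $f_{k}(x_{0})=0$. Exactly as in the smooth computation above, estimating the integral by $\|h_{k}\|_{\infty}\ell(\gamma)$ gives $|f_{k}(y)-f_{k}(x)|\leq\|g\|_{\infty}\,\ell(\gamma)$ for any $x,y$ joined by a path $\gamma$ in $\Omega_{k}$. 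On a fixed compact $K\subset\Omega$ this supplies, for all large $k$, a uniform bound (via a fixed path to $x_{0}$) and equicontinuity (via straight segments for nearby points), so Arzel\`a--Ascoli yields a subsequence $f_{k_{j}}$ converging locally uniformly to some $f$ with $f(x_{0})=0$; letting $k_{j}\to\infty$ in this bound and taking the infimum over paths in $\Omega$ gives $\lipnorm{f}\leq\|g\|_{\infty}$, so $f\in\LipO{M}$.

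To identify the limit I would verify $\nabla f=g$ in $\mathcal{D}'(\Omega)$: for $\phi\in\mathcal{D}(V)$ with $\overline{V}\subset\Omega$ compact, local uniform convergence gives $\int_{V}f_{k_{j}}\,\partial_{i}\phi\to\int_{V}f\,\partial_{i}\phi$, whereas $\int_{V}f_{k_{j}}\,\partial_{i}\phi=-\int_{V}(h_{k_{j}})_{i}\,\phi$ and $(h_{k_{j}})_{i}=g_{i}*u_{k_{j}}\to g_{i}$ in $L^{1}(V)$; hence $\partial_{i}f=g_{i}$ for every $i$, that is $Tf=g$ and $g\in\textup{Im}(T)$. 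Since $T$ is injective by Proposition \ref{Tbounded}, every locally uniformly convergent subsequence of $(f_{k})$ has this same limit $f$, so the whole sequence converges and $f_{k}(x)\to f(x)=T^{-1}g(x)$, which is the formula in \ref{main1_2}; its independence of the path is the conservativeness of each $h_{k}$. Combining $\lipnorm{f}\leq\|g\|_{\infty}=\|Tf\|_{\infty}$ with the reverse inequality $\|Tf\|_{\infty}\leq\lipnorm{f}$ of Proposition \ref{Tbounded} shows $T$ is isometric, and together with surjectivity onto $Y$ and injectivity this yields the asserted isometric isomorphism.

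The main obstacle I anticipate is precisely the convergence of the potentials $f_{k}$: because $g$ is only $L^{\infty}$, the mollifications $h_{k}$ approximate $g$ merely in $L^{1}_{\textup{loc}}$ and not uniformly, so the line integrals defining $f_{k}$ cannot be controlled pointwise along any single curve. The compactness argument sidesteps this by extracting a locally uniform limit of the potentials and transferring the information to the gradient through the distributional pairing, where $L^{1}_{\textup{loc}}$ convergence is all that is needed, while injectivity of $T$ upgrades subsequential convergence to convergence of the full sequence. A secondary point requiring care is that the sets $\Omega_{k}$ need not be connected, which is why $f_{k}$ is defined only on the component of $x_{0}$---harmless, since any fixed point of $\Omega$ belongs to that component for all large $k$.
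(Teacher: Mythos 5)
Your proposal is correct, and its skeleton coincides with the paper's: both directions of \ref{main1_1} are proved by mollifying, taking potentials of the conservative fields $g*u_{k}$ via line integrals, establishing the uniform bound $|f_{k}(y)-f_{k}(x)|\leq\|g\|_{\infty}\ell(\gamma)$, extracting a limit by compactness, and reading off the isometry from the path estimates together with Proposition \ref{Tbounded}. The differences lie in two technical choices, and in both places your route is arguably the more elementary one. First, for the compactness extraction the paper works on the nested components $\Omega_{m}^{x_{0}}$ equipped with their intrinsic metrics $d_{m}$ (proving $d=\inf_{m}d_{m}$), and invokes Banach--Alaoglu in $\LipO{M_{m}}$, where weak-$*$ convergence on bounded sets is pointwise convergence, gluing the limits via nested subsequences; you instead use Arzel\`a--Ascoli to get \emph{locally uniform} convergence of the potentials. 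The paper's weaker (pointwise) convergence suffices because of how it identifies the gradient, while your stronger convergence is exactly what your identification step consumes; the small price you pay is the equicontinuity/domain issue you flag at the end --- for a compact $K\subset\Omega$ one needs all of $K$ (not just each fixed point) to lie in the $x_{0}$-component of $\Omega_{k}$ for all large $k$, which follows by enlarging $K$ to a compact connected set containing $x_{0}$ (finitely many balls covering $K$ plus paths to their centers), a routine step worth making explicit. Second, to show $\nabla f=g$ the paper integrates $\partial_{i}f$ over hyper-rectangles via Fubini and needs the slicing Lemma \ref{restae} to control a.e.\ bounds and convergence along lines; you instead pair against test functions, integrate by parts using that $f_{k_{j}}$ is a genuine $\mathcal{C}^{1}$ potential of $g*u_{k_{j}}$ near $\supp\varphi$, and pass to the limit using only $L^{1}_{\textup{loc}}$ convergence of mollifications --- this bypasses the measure-theoretic lemma entirely and is cleaner, with Rademacher's theorem (as quoted in the paper) converting the distributional identity into the a.e.\ one. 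A minor third difference: the paper verifies the formula in \ref{main1_2} as a left inverse on $\textup{Im}(T)$ via $f*u_{k}\to f$ locally uniformly, whereas you obtain it for every $g\in Y$ by upgrading subsequential to full-sequence convergence through the injectivity of $T$; both are valid, the paper's being slightly more direct once \ref{main1_1} is known, yours having the merit of producing the limit in \ref{main1_2} without first passing through the image.
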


\begin{proof}

    From now on, we will make use of mollifiers, weak derivatives and distributions, and their properties, for which we refer to \cite{E10,FJ99}.
    We begin by proving $\it{\ref{main1_2}.}$. Notice that if $g=\nabla f$ for some $f\in\LipO{M}$, then for $k\in\mathbb{N}$ we see that $g*u_{k}\in\mathcal{C}^{\infty}(\Omega_{k};E^{*})$. Notice that choosing $\Omega_{k}$ as the domain of $g*u_{k}$ allows us to compute the value of
    \[ (g*u_{k})(x) = \int_{\overline{B}(0,1/k)} u_{k}(x)g(x-y) dy \]
    without needing to extend $g$ outside of $\Omega$. Moreover, for $x\in\Omega_{k}$
    \[ (g*u_{k})_{i}(x) = (g_{i}*u_{k})(x) = \langle \partial_{i} f, u_{k}(x-\cdot) \rangle = \langle f,\partial_{i}u_{k}(x-\cdot) \rangle = \partial_{i}(f*u_{k})(x), \]
    that is, $g*u_{k} = \nabla (f*u_{k})$, so by definition $g*u_{k}$ is conservative (which in turn proves one of the inclusions of $\it{\ref{main1_1}.}$, more precisely that $\textup{Im}(T)\subset Y$). Since, by virtue of Proposition \ref{Tbounded}, $T$ is a bounded linear bijection onto its image, in order to prove $\it{\ref{main1_2}.}$, it suffices to show that the operator given in $\it{\ref{main1_2}.}$ is a left inverse for $T$. 
    
    Let $g=\nabla f$ for some $f\in\LipO{M}$. Since $g*u_{k}=\nabla(f*u_{k})$, we see that for any Lipschitz path $\gamma$ going from the base point $x_{0}\in M$ to $x$
    \[ \int_{0}^{1} (g*u_{k})(\gamma(t)) \cdot \gamma'(t) dt = (f*u_{k})(x) - (f*u_{k})(x_{0}). \]
    Since $f$ is continuous, $f*u_{k}$ converges uniformly to $f$ over compact subsets of $\Omega$, and in particular, it converges pointwise to $f$. Then, recalling that $f(x_{0})=0$
    \[ \lim_{k\to \infty} \int_{0}^{1} (g*u_{k})(\gamma(t)) \cdot \gamma'(t) dt = f(x). \]
    We deduce from this the desired formula for $T^{-1}$. 
           
    For the remainder inclusion of $\it{\ref{main1_1}.}$, that is $Y\subset\textup{Im}(T)$, we will prove that provided $g*u_{k}$ is conservative for every $k\in\mathbb{N}$, there exists $f\in\LipO{M}$ such that for every $i\in\{ 1,\ldots,n \}$, $\partial_{i} f = g_{i}$ a.e. on $\Omega$. From this we can deduce directly that $\nabla f = g$ a.e. on $\Omega$. Let $x_{0}\in\Omega$ be the base point of $M$ and consider for every $k\in\mathbb{N}$ the set $\Omega_{k}^{x_{0}}$, defined as the connected component of $\Omega_{k}$ which contains $x_{0}$. Notice that both $\Omega_{k}$ and $\Omega_{k}^{x_{0}}$ are open for every $k$, and $\Omega_{k}^{x_{0}}$ is non-empty whenever $x_{0}\in\Omega_{k}$, that is, $1/k<\textup{dist}(x_{0},\Omega^{c})$. Moreover, $\Omega_{k}^{x_{0}}\subset\Omega_{k+1}^{x_{0}}$ for every $k\in\mathbb{N}$ and
    \[ \Omega = \bigcup_{k\in\mathbb{N}} \Omega_{k}^{x_{0}}, \]
    since for every $x\in\Omega$ there exists a Lipschitz path $\gamma:[0,1]\to\Omega$ going from $x_{0}$ to $x$, whose image satisfies $\gamma([0,1])\subset\Omega_{k}^{x_{0}}$ for every $k$ big enough. We endow each $\Omega_{k}^{x_{0}}$ with its intrinsic distance $d_{k}$, which is well defined as long as $\Omega_{k}^{x_{0}}\neq\emptyset$, and we denote by $M_{k}$ the resulting metric spaces. We see that for every $x,y\in\Omega_{k}^{x_{0}}$, the sequence $(d_{i}(x,y))_{i\geq k}$ is decreasing. Indeed, since $\Omega_{k}^{x_{0}}\subset\Omega_{i}^{x_{0}}$ for every $i\geq k$, we have that $d_{i}(x,y)$ is well defined. Moreover, since every Lipschitz path between $x$ and $y$ in $\Omega_{i}^{x_{0}}$ is also a Lipschitz path in $\Omega_{i+1}^{x_{0}}$, we deduce that $d_{i+1}(x,y)\leq d_{i}(x,y)$. By a similar reasoning, we deduce that for every $x,y\in\Omega$ and every $k\in\mathbb{N}$ big enough (it suffices that $x,y\in\Omega_{k}^{x_{0}}$), $d(x,y)\leq d_{k}(x,y)$. We claim that actually
    \[ d(x,y) = \inf_{k\in\mathbb{N}} d_{k}(x,y). \]
    To see this, let $x,y\in\Omega$ and $\varepsilon>0$. Then, there exists a Lipschitz path $\gamma:[0,1]\to\Omega$ joining $x$ and $y$ such that $\ell(\gamma)\leq d(x,y)+\varepsilon$. But then, there exists $i\in\mathbb{N}$ such that $\gamma([0,1])\subset\Omega_{k}^{x_{0}}$, for every $k\geq i$. Then we see that whenever $k\geq i$
    \[ d(x,y)\leq d_{k}(x,y)\leq \ell(\gamma) \leq d(x,y)+\varepsilon, \]
    which leads to $\displaystyle d(x,y)= \lim_{k\to\infty} d_{k}(x,y) = \inf_{k\in\mathbb{N}}d_{k}(x,y)$. Having stated this framework in $\Omega$, we proceed to construct the desired potential for $g$. For $k\geq m$ define the functions $f_{k}^{m}:\Omega_{m}^{x_{0}}\to\mathbb{R}$ as
    \[ f_{k}^{m}(x) = \int_{0}^{1} (g*u_{k})(\gamma(t))\cdot\gamma'(t) dt, \]
    where $\gamma:[0,1]\to\Omega_{m}^{x_{0}}$ is any Lipschitz path going from $x_{0}$ to $x$. Since we are assuming that $g*u_{k}$ is conservative, the choice of this path is irrelevant in the definition of $f_{k}^{m}$. For every $k\geq m$, $f_{k}^{m}\in\LipO{M_{m}}$, since for every Lipschitz path $\gamma_{xy}:[0,1]\to\Omega_{k}^{x_{0}}$
    \[ |f_{k}^{m}(y)-f_{k}^{m}(x)| = \left| \int_{0}^{1} (g*u_{k})(\gamma_{xy}(t))\cdot\gamma'_{xy}(t) dt \right| \]
    \[ \leq \int_{0}^{1} \|(g*u_{k})(\gamma_{xy}(t))\|_{*}\|\gamma'_{xy}(t)\| dt \]
    \[ \leq \|g*u_{k}\|_{\infty}\int_{0}^{1} \|\gamma'_{xy}(t)\| dt \leq \|g\|_{\infty}\ell(\gamma_{xy}). \]
    From this inequality and the fact that $\gamma_{xy}:[0,1]\to\Omega_{k}^{x_{0}}$ was arbitrary, we deduce that
    \[ |f_{k}^{m}(y)-f_{k}^{m}(x)| \leq \|g\|_{\infty}d_{k}(x,y)\leq \|g\|_\infty d_m(x,y), \]
    proving that $f_{k}^{m}\in\LipO{M_{m}}$ for every $k\geq m$ and also that for fixed $m$, the sequence $(f_{k}^{m})_{k\geq m}$ is bounded by $\|g\|_{\infty}$ in $\LipO{M_{m}}$. We proceed to construct then a sequence of functions $(f^{m})_{m\in\mathbb{N}}$, with $f^{m}\in\LipO{M_{m}}$ by induction. Suppose without loss of generality that $\Omega_{1}^{x_{0}}\neq\emptyset$ and define $f_{1}$ as the pointwise limit of a subsequence of $(f_{k}^{1})_{k\geq 1}$. This is possible thanks to Banach-Alaoglu theorem, since $\lipnorm{f_{k}^{1}}$ is bounded, and the fact that the weak-$*$ topology of $\LipO{M_{1}}$ coincides with the topology of pointwise convergence on bounded sets. More precisely, for some increasing function $\alpha_{1}:\mathbb{N}\to\mathbb{N}$ we have that
    \[ (\forall x\in\Omega_{1}^{x_{0}}) \quad f^{1}(x) = \lim_{k\to\infty} f^{1}_{\alpha_{1}(k)}(x). \]
    Now, having already defined $f^{m}\in\LipO{M_{m}}$ as
    \[ (\forall x\in\Omega_{m}^{x_{0}}) \quad f^{m}(x) = \lim_{k\to\infty} f^{m}_{\alpha_{m}(k)}(x), \]
    where $\alpha_{m}:\mathbb{N}\to\mathbb{N}$ is increasing, consider the sequence $(f^{m+1}_{\alpha_{m}(k)})_{k\geq m+1}$, which is well defined since $\alpha_{m}(k)\geq k \geq m+1$. Since this new sequence remains bounded in $\LipO{M_{m+1}}$, we can extract a subsequence which is pointwise convergent in $\LipO{M_{m+1}}$, that is, there exists $\alpha_{m+1}:\mathbb{N}\to\mathbb{N}$ increasing such that $(f^{m+1}_{\alpha_{m+1}(k)})_{k\geq m+1}$ converges pointwise, and define $f^{m+1}$ as its pointwise limit
    \[ (\forall x\in\Omega_{m+1}^{x_{0}}) \quad f^{m+1}(x) = \lim_{k\to\infty} f^{m+1}_{\alpha_{m+1}(k)}(x). \]
    We summarize now some properties of the functions defined this way. Notice that every $x\in\Omega$ belongs to $\Omega_{m}^{x_{0}}$ for every sufficiently big $m\in\mathbb{N}$. We claim that $m\mapsto f^{m}(x)$ is constant for every $m\in\mathbb{N}$ big enough. Let $m\in\mathbb{N}$ be such that $x\in\Omega_{m}^{x_{0}}$. Then, by definition, $f^{m}_{k}$ and $f^{m+1}_{k}$ coincide in $\Omega_{m}^{x_{0}}$ for $k>m$, which implies that $f^{m}(x) = f^{m+1}(x)$ for every $x\in\Omega_{m}^{x_{0}}$, since $\alpha_{m+1}$ defines a subsequence of $\alpha_{m}$. Then, we can define $f:\Omega\to\mathbb{R}$ as
    \[ f(x) = \lim_{m\to\infty} f^{m}(x). \]
    We will see that this function belongs to $\LipO{M}$. Indeed, let $x,y\in\Omega$ and take $m\in\mathbb{N}$ such that $x,y\in\Omega_{m}^{x_{0}}$. Then, noticing that $m\mapsto f^{m}(y)-f^{m}(x)$ is constant and that the functions $f^{m}$ are weak-$*$ limits of functions bounded by $\|g\|_{\infty}$ in $\LipO{M_{m}}$, we have that for every big enough $m\in\mathbb{N}$
    \[ |f(y)-f(x)| = |f^{m}(y)-f^{m}(x)| \leq \|g\|_{\infty} d_{m}(x,y), \]
    which leads to $|f(y)-f(x)| \leq \|g\|_{\infty} d(x,y)$, since $\displaystyle d(x,y)=\inf_{m\in\mathbb{N}}d_{m}(x,y)$, that is, $f\in\LipO{M}$ with $\lipnorm{f}\leq\|g\|_{\infty}$.

    We claim that $f\in\LipO{M}$ satisfies that for every $i\in\{1,\ldots,n\}$, $\partial_{i}f = g_{i}$ in $L^{\infty}(\Omega)$. Let $H=[a_{1},b_{1}]\times\cdots\times[a_{n},b_{n}]$ be a hyper-rectangle contained in $\Omega$, and denote its elements as $x = (z,t)$, where $z\in H'\subset\mathbb{R}^{n-1}$ is formed by all the coordinates of $x$ except for the $i^{\text{th}}$ coordinate, and $t=x_{i}$. Then, by Fubini's theorem
    \[ \int \partial_{i}f(x)\mathds{1}_{H}(x) dx = \int_{H'} \int_{a_{i}}^{b_{i}} \partial_{i}f(z,t) dt dz = \int_{H'} f(z,b_{i})-f(z,a_{i}) dz. \]
    Notice that for $m\in\mathbb{N}$ big enough, $H\subset\Omega_{m}^{x_{0}}$, from which we have that
    \[ \int_{H'} f(z,b_{i})-f(z,a_{i}) dz = \int_{H'} \lim_{k\to\infty} f^{m}_{\alpha_{m}(k)}(z,b_{i})-f^{m}_{\alpha_{m}(k)}(z,a_{i}) dz  \]
    \[ = \int_{H'} \left(\lim_{k\to\infty} \int_{0}^{1} (g*u_{\alpha_{m}(k)})(z,a_{i}+t(b_{i}-a_{i}))\cdot (b_{i}-a_{i})e_{i} dt \right) dz \]
    \[ = \int_{H'}\left(\lim_{k\to\infty} \int_{a_{i}}^{b_{i}} (g_{i}*u_{\alpha_{m}(k)})(z,t) dt\right) dz. \]
    Since $g_{i}*u_{\alpha_{m}(k)}\in L^{\infty}(\Omega)$, and thanks to Lemma \ref{restae}, the functions defined on $[a_{i},b_{i}]$ as $t\mapsto (g_{i}*u_{\alpha_{m}(k)})(z,t)$ belong to $L^{\infty}([a_{i},b_{i}])$ for almost every $z\in H'$, satisfy $(g_{i}*u_{\alpha_{m}(k)})(z,t)\leq\|g\|_{\infty}$ for almost every $t\in[a_{i},b_{i}]$ and converge almost everywhere on $t\in[a_{i},b_{i}]$ to $g_{i}(z,t)$, since $g_{i}*u_{\alpha_{m}(k)}$ converges almost everywhere in $\Omega$ to $g_{i}$. More precisely, recall that for a function $G:H=H'\times[a_{i},b_{i}]\to\mathbb{R}$, Lebesgue-measurability in $H$ implies that for almost every $z\in H'$, the function $t\mapsto G(z,t)$ is Lebesgue measurable in $[a_{i},b_{i}]$. Moreover, if $G$ is bounded up to a null measure set, thanks to Lemma \ref{restae}, the same is true for almost every $z\in H'$ for the function $t\mapsto G(z,t)$. By the same argument, we can deduce the convergence. From this, we finally see that
    \[ \int \partial_{i}f(x)\mathds{1}_{H}(x) dx = \int g_i(x)\mathds{1}_{H}(x) dx, \]
    that is, $\partial_{i}f = g_{i}$ almost everywhere in $\Omega$, from which we obtain $(1)$.

    Finally, we show that for every $g\in\textup{Im}(T)$, $\lipnorm{T^{-1}g}\leq\|g\|_{\infty}$, which together with Proposition \ref{Tbounded} implies that $T:\LipO{M}\to Y$ is an isometric isomorphism. Let $x,y\in\Omega$ and consider $\gamma_{x},\gamma_{y}:[0,1]\to\Omega$ Lipschitz paths going from $x_{0}$ to $x$ and $y$, respectively. If $\gamma_{xy}:[0,1]\to\Omega$ stands for the path given by rescaling the domain of the concatenation of the reverse of $\gamma_{x}$ and $\gamma_{y}$, we see that $\gamma_{xy}$ is a Lipschitz path going from $x$ to $y$. then
    \[ |T^{-1}g(y)-T^{-1}g(x)| \]
    \[ = \lim_{k\to\infty} \left| \int_{0}^{1} (g*u_{k})(\gamma_{y}(t))\cdot\gamma_{y}'(t) dt - \int_{0}^{1} (g*u_{k})(\gamma_{x}(t))\cdot\gamma_{x}'(t) dt \right| \]
    \[ = \lim_{k\to\infty}\left| \int_{0}^{1} (g*u_{k})(\gamma_{xy}(t))\cdot\gamma_{xy}'(t) dt \right| = \lim_{k\to\infty}\left| \int_{0}^{1} (g*u_{k})(\gamma(t))\cdot\gamma'(t) dt \right|, \]
    where $\gamma:[0,1]\to\Omega$ is any Lipschitz path going from $x$ to $y$. The last equality is justified by the fact that $g*u_{k}$ is conservative in $\Omega_{k}$ and that every Lipschitz path $\gamma$ going from $x$ to $y$ satisfies that $\gamma([0,1])$ is compact, which implies that $\gamma([0,1])\subset\Omega_{k}$ for every $k$ big enough. But notice that for a fixed path $\gamma$ as such and $k$ big enough
    \[ \left| \int_{0}^{1} (g*u_{k})(\gamma(t))\cdot\gamma'(t) dt \right| \leq \sup_{z\in\Omega_{k}} \|(g*u_{k})(z)\|_{*} \int_{0}^{1} \|\gamma'(t)\| dt \]
    \[ = \sup_{z\in\Omega_{k}} \|(g*u_{k})(z)\|_{*} \ell(\gamma). \]
    But considering that for $z\in\Omega_{k}$ it holds that
    \[ \|(g*u_{k})(z)\|_{*} = \left\| \int_{|y-x|<\frac{1}{k}} g(y)u_{k}(x-y) dy \right\|_{*} \]
    \[ \leq \int_{|y-x|<\frac{1}{k}} \|g(y)\|_{*} u_{k}(x-y) dy \leq \|g\|_{\infty} \int_{E} u_{k}(x-y) dy = \|g\|_{\infty}, \]
    we deduce that
    \[ \left| \int_{0}^{1} (g*u_{k})(\gamma(t))\cdot\gamma'(t) dt \right| \leq \sup_{z\in\Omega_{k}} \|(g*u_{k})(z)\|_{*} \ell(\gamma) \leq \|g\|_{\infty}\ell(\gamma), \]
    which leads to
    \[ |T^{-1}g(y)-T^{-1}g(x)| = \lim_{k\to\infty}\left| \int_{0}^{1} (g*u_{k})(\gamma(t))\cdot\gamma'(t) dt \right| \leq \|g\|_{\infty}\ell(\gamma). \]
    But $\gamma$ was arbitrary, so $|T^{-1}g(y)-T^{-1}g(x)| \leq \|g\|_{\infty}d(x,y)$. Hence, since the last inequality is valid for every $x,y\in\Omega$ and $g\in\textup{Im}(T)$, we have that $\lipnorm{T^{-1}g}\leq\|g\|_{\infty}$ for every $g\in\textup{Im}(T)$.
    
\end{proof}

Notice that Theorem \ref{main1} is written in the form of \cite[Proposition 3.2]{CKK17}, where $\Omega$ is an open convex subset of $E$. In order to reinforce this comparison, notice first that if $R\in\mathcal{D}'(\Omega)$ then, for every $\varphi\in\mathcal{D}(\Omega)$
\[ \langle \partial_{ij}R,\varphi \rangle = \langle R,\partial_{ji}\varphi \rangle = \langle R,\partial_{ij}\varphi \rangle = \langle \partial_{ji}R,\varphi \rangle, \]
where the second equality is valid thanks to the smoothness of $\varphi$. In particular, applying this for $f\in\LipO{M}$ and its associated distribution, we see that $g=\nabla f$ satisfies
\[ \langle \partial_{i}g_{j},\varphi \rangle = \langle \partial_{ij}f,\varphi \rangle = \langle \partial_{ji}f,\varphi \rangle = \langle \partial_{j}g_{i},\varphi \rangle \quad \forall \varphi\in\mathcal{D}'(\Omega). \]
Hence, $g\in Y \Rightarrow \partial_{i}g_{j}=\partial_{j}g_{i}$ in $\mathcal{D}'(\Omega)$, but the converse is not true in general. In the next proposition, we show that the converse becomes true under an extra hypothesis.

\begin{proposition}\label{simplyconn}
    Suppose now that $\Omega$ is a simply connected domain such that every connected component of $\Omega_{k}$ is simply connected. Then
    \[ Y = \{ g\in L^{\infty}(\Omega;E^{*}) : \partial_{i}g_{j} = \partial_{j}g_{i} \text{ in } \mathcal{D}'(\Omega) \} \]
\end{proposition}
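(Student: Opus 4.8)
The plan is to prove only the inclusion that is not already on record. The discussion preceding the proposition establishes that $g\in Y$ implies $\partial_i g_j = \partial_j g_i$ in $\mathcal{D}'(\Omega)$, i.e. the inclusion $Y\subset\{g\in L^\infty(\Omega;E^*):\partial_i g_j=\partial_j g_i\text{ in }\mathcal{D}'(\Omega)\}$. Hence it remains to prove the reverse: if $g\in L^\infty(\Omega;E^*)$ satisfies $\partial_i g_j = \partial_j g_i$ in $\mathcal{D}'(\Omega)$ for all $i,j$, then $g*u_k$ is conservative on $\Omega_k$ for every $k\in\mathbb{N}$, which is exactly membership in $Y$.

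Fix $k\in\mathbb{N}$ (the case $\Omega_k=\emptyset$ being vacuous). First I would recall that $g*u_k\in\mathcal{C}^\infty(\Omega_k;E^*)$, so it is in particular a $\mathcal{C}^1$ vector field to which Poincaré's Lemma can be applied. The key step is to transfer the distributional identity into a genuine pointwise identity for the mollified field. Using the standard commutation of convolution with differentiation, for every $x\in\Omega_k$ one has
\[ \partial_i (g*u_k)_j = (\partial_i g_j)*u_k = (\partial_j g_i)*u_k = \partial_j(g*u_k)_i, \]
where the outer equalities are the identity $\partial_i(g_j*u_k)=(\partial_i g_j)*u_k$ on $\Omega_k$ (valid because the support of $u_k(x-\cdot)$ lies in $\overline{B}(x,1/k)\subset\Omega$ for $x\in\Omega_k$, so no extension of $g$ is needed), and the middle equality is precisely the hypothesis. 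Thus $g*u_k$ has a symmetric Jacobian throughout $\Omega_k$.

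Next I would invoke the topological hypothesis on the sublevel sets. By assumption every connected component $U$ of $\Omega_k$ is a simply connected domain, so Lemma \ref{poincare} applies to the $\mathcal{C}^1$ field $\restricted{(g*u_k)}{U}$: the symmetry of its Jacobian forces $g*u_k$ to be conservative on $U$. Since this holds on each component, one obtains a $\mathcal{C}^1$ potential on every component, and gluing these (each determined up to an additive constant, with the components being open and pairwise disjoint) yields a single $\mathcal{C}^1$ potential on all of $\Omega_k$. Hence $g*u_k$ is conservative on $\Omega_k$. As $k$ was arbitrary, $g\in Y$, which proves the reverse inclusion and hence the claimed equality.

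I expect the only genuinely delicate point to be the commutation step, where one must verify that the distributional derivative $\partial_i g_j$ convolved with $u_k$ coincides with the classical partial derivative of the smooth function $(g*u_k)_j$; this is where the choice of domain $\Omega_k$ is essential, so that the convolution is well defined without extending $g$. Everything else is a direct application of Lemma \ref{poincare} componentwise, together with the elementary observation that conservativeness on an open set is equivalent to conservativeness on each of its connected components.
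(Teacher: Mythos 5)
Your proposal is correct and follows essentially the same route as the paper: the paper also reduces to the reverse inclusion, obtains the pointwise identity $\partial_i(g_j*u_k)(x)=\partial_j(g_i*u_k)(x)$ on $\Omega_k$ by testing the distributional hypothesis against $\varphi(y)=u_k(x-y)$ (which is exactly your commutation step, made explicit), and then applies Lemma \ref{poincare} on each simply connected component of $\Omega_k$. Your added remark about gluing the componentwise potentials is a harmless elaboration of the paper's one-line conclusion that conservativeness on every component gives conservativeness on $\Omega_k$.
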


\begin{proof}
    Since every $g\in Y$ satisfies $\partial_{i}g_{j} = \partial_{j}g_{i}$ in $\mathcal{D}'(\Omega)$, we just need to show the other inclusion. Suppose that $g\in L^{\infty}(\Omega;E^{*})$ is such that $\partial_{i}g_{j}=\partial_{j}g_{i}$ in $\mathcal{D}'(\Omega)$, and fix $k\in\mathbb{N}$ and $x\in\Omega_{k}$. We see that the function $\varphi(y) = u_{k}(x-y)$ belongs to $\mathcal{D}(\Omega_{k})$. Hence
    \[ \langle \partial_{i}g_{j},\varphi \rangle = \langle \partial_{j}g_{i},\varphi \rangle \iff \langle g_{j},\partial_{i}\varphi \rangle = \langle g_{i},\partial_{j}\varphi \rangle \]
    \[ \iff \int g_{j}(y)\partial_{i}\varphi(y) dy = \int g_{i}(y)\partial_{j}\varphi(y) dy. \]
    But we see that $\partial_{i}\varphi(y) = -\partial_{i}u_{k}(x-y)$ for every $i\in\{1,\ldots,n\}$. From this
    \[ \iff \int g_{j}(y)\partial_{i}u_{k}(x-y) dy = \int g_{i}(y)\partial_{j}u_{k}(x-y) dy \] 
    \[ \iff (g_{j}*\partial_{i}u_{k})(x) = (g_{i}*\partial_{j}u_{k})(x) \iff \partial_{i}(g_{j}*u_{k})(x) = \partial_{j}(g_{i}*u_{k})(x). \]
    By virtue of Lemma \ref{poincare}, this is equivalent to $g*u_{k}$ being conservative in every connected component of $\Omega_{k}$, since they are simply connected. Hence, $g*u_{k}$ is conservative.
\end{proof}

Notice that, in particular, convex domains satisfy the conditions of Proposition \ref{simplyconn}. Then, Proposition \ref{simplyconn} in combination with Theorem \ref{main1}, allows us to recover \cite[Proposition 3.2]{CKK17}.

%------------------------------------------------------------
\subsection{Identification of the Lipschitz-free space}

In Section \ref{Lip0}, we have established an isometry between $\LipO{M}$ and the subspace $Y$ of $L^{\infty}(\Omega;E^{*})$, consisting on vector fields whose mollifications are conservative vector fields. This isometry is given by the gradient operator defined over $\LipO{M}$. Our aim is to use our characterization for $Y$, which is the image of $T$, to show that $Y$ is actually the annihilator of a subspace of $L^{1}(\Omega;E)$, which is defined in terms of a differential operator in the sense of distributions. In order to do this, recall that if $V$ is a vector space and $U,W$ are subspaces of $V$ and $V^{*}$, respectively, the annihilator of $U$ and the preannihilator of $W$ are defined respectively as follows
\[ U^{\perp} := \{ z\in V^{*} : \langle z,u \rangle = 0 \text{ for every } u\in U \} \]
\[ W_{\perp} := \{ z\in V : \langle w,z \rangle=0 \text{ for every } w\in W \}\]
From now on, whenever $h\in L^{1}(\Omega;E)$, $\textup{div}(h)$ stands for the divergence in the sense of distributions in $E$ of the extension of $h$ by zero outside $\Omega$.

\begin{proposition}\label{Xprop}
    Let
    \[ X := \{ h \in L^{1}(\Omega;E) : \textup{div}(h) = 0 \text{ in } \mathcal{D}'(E) \}. \]
    Then
    \begin{enumerate}[label=\roman*)]
        \item\label{Xprop_1} $Y = X^{\perp}$ and $X = Y_{\perp}$ in the standard duality $(L^{1}(\Omega;E))^{*} = L^{\infty}(\Omega;E^{*})$.
        \item\label{Xprop_2} $X\cap C_{0}^{\infty}(\Omega;E)$ is dense in $X$.
    \end{enumerate}
\end{proposition}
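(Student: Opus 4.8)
My plan is to obtain part \ref{Xprop_1} from a soft duality argument that invokes part \ref{Xprop_2}, and to concentrate the genuine analytic work in part \ref{Xprop_2}. Write $W$ for the linear subspace $\{\restricted{\nabla\varphi}{\Omega}:\varphi\in\mathcal{D}(E)\}\subset L^{\infty}(\Omega;E^{*})$. Unwinding the definition of $\textup{div}(h)$ on $E$ for the zero-extension of $h$, one has $h\in X$ if and only if $\int_{\Omega}h\cdot\nabla\varphi=0$ for every $\varphi\in\mathcal{D}(E)$; that is, $X=W_{\perp}$. I would then record two closedness facts. First, $X$ is norm closed in $L^{1}(\Omega;E)$, because $\langle\textup{div}(h),\varphi\rangle=-\int_{\Omega}h\cdot\nabla\varphi$ is continuous in $h$. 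Second, $Y$ is weak-$*$ closed in $L^{\infty}(\Omega;E^{*})$: by Proposition \ref{prop:cons}, $g\in Y$ means $\oint_{\gamma}(g*u_{k})\cdot d\vec{r}=0$ for every $k$ and every piecewise linear loop $\gamma$ in $\Omega_{k}$, and a Fubini computation rewrites each such functional as $g\mapsto\langle g,\Psi_{\gamma,k}\rangle$ with $\Psi_{\gamma,k}(y)=\int_{0}^{1}\gamma'(t)\,u_{k}(\gamma(t)-y)\,dt$ continuous and compactly supported in $\Omega$, hence in $L^{1}(\Omega;E)$; so each constraint, and therefore $Y$, is weak-$*$ closed. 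Finally $W\subset Y$, since $\restricted{(\nabla\varphi)*u_{k}}{\Omega_{k}}=\nabla(\varphi*u_{k})$ is a gradient and thus conservative.

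With this in place, the bipolar theorem in the dual pair $(L^{1},L^{\infty})$ gives $X^{\perp}=(W_{\perp})^{\perp}=\overline{W}^{\,w*}\subset Y$, where $\overline{W}^{\,w*}$ is the weak-$*$ closure. For the reverse inclusion, take $g=\nabla f\in Y$ with $f\in\LipO{M}$ and $h\in X$, and use part \ref{Xprop_2} to pick $h_{j}\in X\cap C_{0}^{\infty}(\Omega;E)$ with $h_{j}\to h$ in $L^{1}$. Since $h_{j}$ is a genuine compactly supported test field and $\nabla f$ is the distributional gradient of $f$, integration by parts is legitimate and yields $\langle\nabla f,h_{j}\rangle=-\int_{\Omega}f\,\textup{div}(h_{j})=0$; letting $j\to\infty$ gives $\langle g,h\rangle=0$, so $Y\subset X^{\perp}$ and hence $Y=X^{\perp}$. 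The remaining identity is then formal: as $X$ is norm closed, $Y_{\perp}=(X^{\perp})_{\perp}=X$.

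For part \ref{Xprop_2} I would extend $h$ by zero to $E\cong\mathbb{R}^{n}$ and produce approximants that are divergence free \emph{by construction}. Solving the full-space equations yields a skew-symmetric potential $A=(A_{ij})$, $A_{ij}=-A_{ji}$, with $h_{i}=\sum_{j}\partial_{j}A_{ij}$ (for instance $A_{ij}=\partial_{j}N*h_{i}-\partial_{i}N*h_{j}$ with $N$ the Newtonian potential). Taking a cutoff $\chi_{k}\in\mathcal{D}(\Omega)$ increasing to $1$ and a mollification scale $m_{k}\to\infty$ adapted to $\chi_{k}$, set $A^{(k)}=\chi_{k}\,(A*u_{m_{k}})$, which is again skew-symmetric, and define $h^{(k)}_{i}=\sum_{j}\partial_{j}A^{(k)}_{ij}$. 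Then $h^{(k)}\in C_{0}^{\infty}(\Omega;E)$ and $\textup{div}(h^{(k)})=\sum_{i,j}\partial_{i}\partial_{j}A^{(k)}_{ij}=0$ by skew-symmetry, so $h^{(k)}\in X\cap C_{0}^{\infty}(\Omega;E)$. Expanding, $h^{(k)}=\chi_{k}\,(h*u_{m_{k}})+R_{k}$ with $R_{k,i}=\sum_{j}(\partial_{j}\chi_{k})\,(A*u_{m_{k}})_{ij}$, and since the first term tends to $h$ in $L^{1}$ the whole problem reduces to proving $\|R_{k}\|_{L^{1}}\to0$.

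Controlling $R_{k}$ is the main obstacle, and it is genuinely delicate: the $L^{1}$ theory of the divergence equation has no bounded Leray projection, so one cannot simply project onto divergence-free fields. On the shell where $\nabla\chi_{k}$ is supported one has $|\nabla\chi_{k}|\sim 1/(\mathrm{width})$, and the crude bound $\|R_{k}\|_{1}\lesssim\|\nabla\chi_{k}\|_{\infty}\int_{\mathrm{shell}}|A|$ is only $O(1)$, because $A$ decays like $|x|^{1-n}$. The decisive point is that $\int_{\mathbb{R}^{n}}h=0$ — which follows from $\widehat{h}$ being continuous and $\xi\cdot\widehat{h}(\xi)\equiv0$ — so the leading multipole of $A$ cancels, improving its decay to $|x|^{-n}$ and rendering the shell estimate $o(1)$ once the transition radii are chosen along a sequence where the relevant integral is small (an averaging argument permitted by $h,A\in L^{1}_{\mathrm{loc}}$ together with the improved decay). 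An equivalent route replaces $A$ by a Bogovskii correction of $\textup{div}(\chi_{k}h)=\nabla\chi_{k}\cdot h$, solvable precisely because $\int\nabla\chi_{k}\cdot h=0$; either way, the adapted choice of cutoff scale, and for unbounded $\Omega$ a preliminary reduction to compactly supported data, is where the real effort goes.
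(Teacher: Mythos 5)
Your part \emph{i)} is essentially correct, and it is in fact the paper's own duality skeleton run in the opposite direction: your fields $\Psi_{\gamma,k}(y)=\int_{0}^{1}\gamma'(t)u_{k}(\gamma(t)-y)\,dt$ are precisely the compactly supported divergence-free fields the paper constructs to prove $(X\cap\mathcal{C}_{0}^{\infty}(\Omega;E))^{\perp}\subset Y$, and the identification $X=W_{\perp}$, the bipolar theorem, and the integration by parts against $h_{j}\in X\cap\mathcal{C}_{0}^{\infty}(\Omega;E)$ are all fine. But you have routed \emph{all} of the analytic content into a direct proof of \emph{ii)}, and that proof has a genuine gap exactly where you locate the ``real effort'': the term $R_{k}$ near $\partial\Omega$. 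The proposition assumes no boundary regularity whatsoever. On a strip $S_{\delta}=\{\delta/2<\operatorname{dist}(x,\Omega^{c})<\delta\}$ your cutoff forces $\|\nabla\chi_{k}\|_{\infty}\sim\delta^{-1}$, so you need $\delta^{-1}\int_{S_{\delta}}|A*u_{m_{k}}|\to0$ along your chosen scales. Neither rescue mechanism operates there: the cancellation $\int_{\mathbb{R}^{n}}h=0$ improves the decay of $A$ \emph{at infinity} only, while near $\partial\Omega$ the potential $A$ is merely in $L^{\frac{n}{n-1},\infty}_{\mathrm{loc}}$ (a Riesz potential of order $1$ of an $L^{1}$ datum) with no smallness; for rough boundaries $|S_{\delta}|$ need not be $O(\delta)$; and the averaging argument is quantitatively insufficient: with dyadic shells and $a_{j}:=\int_{S_{2^{-j}}}|A|$, local integrability gives $\sum_{j}a_{j}<\infty$, hence $\liminf_{j}j\,a_{j}=0$, but you need $\liminf_{j}2^{j}a_{j}=0$, which summability does not provide (take $a_{j}=2^{-j/2}$). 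The Bogovskii variant inherits the same defect: the datum $\nabla\chi_{k}\cdot h$ has $L^{1}$ norm of order $k\|h\|_{L^{1}(S_{1/k})}$, which need not stay bounded; the Bogovskii operator is not bounded on $L^{1}$ (as you yourself note for the Leray projection); and the shell near an arbitrary boundary is not a John domain. So \emph{ii)} is not proved, and since your \emph{i)} quotes \emph{ii)}, the argument does not close.

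The instructive contrast with the paper is that it never attempts a direct proof of \emph{ii)}. It proves $Y=(X\cap\mathcal{C}_{0}^{\infty}(\Omega;E))^{\perp}$ with your $\Psi_{\gamma,k}$ computation, and then proves $Y\subset X^{\perp}$ by running the cutoff-and-mollify scheme one level up, on the \emph{scalar Lipschitz potential} $f$ of $g=\nabla f\in Y$ (available by Theorem \ref{main1}): there the dangerous product $F_{m}\nabla\chi_{R}$ is controlled because $|F_{m}(x)|\leq\lipnorm{f}\left(\|x-x_{0}\|+\tfrac{1}{m}\right)$ grows linearly, which exactly cancels the $1/R$ decay of the cutoff gradient --- a mechanism with no analogue for your matrix potential $A$ of an $L^{1}$ field, where the natural bounds scale the wrong way. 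With both annihilator identities in hand, the density \emph{ii)} then drops out by soft duality: $\overline{X\cap\mathcal{C}_{0}^{\infty}(\Omega;E)}^{\perp}=Y\subset X^{\perp}$ forces $\overline{X}\subset\overline{X\cap\mathcal{C}_{0}^{\infty}(\Omega;E)}$, and $X$ is norm closed. To repair your proposal, invert your logic in the same way: keep your weak-$*$ closedness observation and bipolar argument, replace your constructive approximation of $h$ by the paper's weak-$*$ approximation of $\nabla f$ by gradients of test functions, and obtain \emph{ii)} as a corollary rather than an input.
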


\begin{proof}
    We begin by describing the outline of the proof. First we will prove that $Y = (X\cap \mathcal{C}_{0}^{\infty}(\Omega;E))^{\perp}$, from which we deduce that
    \[ Y = \overline{X\cap \mathcal{C}_{0}^{\infty}(\Omega;E)}^{\perp}\quad\text{and}\quad Y_{\perp} = \overline{X\cap \mathcal{C}_{0}^{\infty}(\Omega;E)}. \]
    Then, we proceed to show that $Y\subset X^{\perp}$, which allows us to deduce $\it{\ref{Xprop_2}}.$, which finally implies $\it{\ref{Xprop_1}}.$

    Let $g\in (X\cap \mathcal{C}_{0}^{\infty}(\Omega;E))^{\perp}$. We need to prove that $g*u_{k}$ is conservative for every $k$. To this end, fix $k\in\mathbb{N}$ and consider Lipschitz closed path $\gamma:[0,1]\to\Omega_{k}$. Then
    \[ \int_{\gamma} (g*u_{k})(\gamma(t))\cdot \gamma'(t) dt = \int_{0}^{1}\int_{\Omega} g(z)u_{k}(\gamma(t)-z)\cdot \gamma'(t) dzdt \]
    \[ = \int_{\Omega} \left[ \int_{0}^{1} u_{k}(\gamma(t)-z)\gamma'(t) dt \right] \cdot g(z) dz = \int_\Omega h(z)\cdot g(z) dz, \]
    where 
    \[ h(z) := \int_{0}^{1} u_{k}(\gamma(t)-z)\gamma'(t) dt. \]
    We claim that the function $h$ is smooth, compactly supported and has null divergence. Thanks to the regularity of $u_{k}$, it is clear that $h$ is smooth. Moreover, it is compactly supported, since if $z\notin \gamma([0,1]) + \overline{B}(0,1/k)$, then for every $t\in[0,1]$
    \[ \|\gamma(t)-z\| > \frac{1}{k}, \]
    from which $u_{k}(\gamma(t)-z)=0$ for every $t\in[0,1]$, which leads to $h(z)=0$. Then, $\supp(h)\subset \gamma([0,1]) + \overline{B}(0,1/k)$, which is compact. Finally, we see that
    \[ \textup{div}(h)(z) = \sum_{j=1}^{n}\int_{0}^{1} \partial_{j}u_{k}(\gamma(t)-z)\gamma'_{j}(t) dt \]
    \[ = \int_{0}^{1} \nabla u_{k}(\gamma(t)-z) \cdot \gamma'(t) dt = \int_{0}^{1} (u_{k}\circ(\gamma-z))'(t) dt \]
    \[ = u_{k}(\gamma(1)-z) - u_{k}(\gamma(0)-z) = 0. \]
    Then, $h\in X\cap \mathcal{C}_{0}^{\infty}(\Omega;E)$ and we deduce that
    \[ \int_{\gamma} (g*u_{k})(\gamma(t))\cdot \gamma'(t) dt = \int_{\Omega} h(z) \cdot g(z) dz = \langle g,h \rangle = 0. \]
    Then, for every closed Lipschitz curve $\gamma\subset\Omega_{k}$, $g*u_{k}$ integrates $0$ over $\gamma$, that is, $g*u_{k}$ is conservative by Proposition \ref{prop:cons}. But this is valid for every $k\in\mathbb{N}$, which leads to $g\in Y$, that is $(X\cap\mathcal{C}_{0}^{\infty}(\Omega;E))^{\perp}\subset Y$. 

    The proof of the other inclusion is exactly the same as in \cite{CKK17} and we give the details for completeness. Let $g = \nabla f$ for some $f\in\LipO{M}$ and choose $h\in X\cap \mathcal{C}_{0}^{\infty}(\Omega;E)$. Then
    \[ \langle g,h \rangle = \sum_{i=1}^{n} \langle \partial_{i} f,h_{i} \rangle = -\sum_{i=1}^{n} \langle f,\partial_{i}h_{i} \rangle = -\left\langle f, \textup{div}(h) \right\rangle = 0, \]
    that is, $g\in (X\cap \mathcal{C}_{0}^{\infty}(\Omega;E))^{\perp}$. From all this, we have that 
    \[ Y = (X\cap \mathcal{C}_{0}^{\infty}(\Omega;E))^{\perp} = \overline{X\cap \mathcal{C}_{0}^{\infty}(\Omega;E)}^{\perp}, \]
    and by virtue of Hahn-Banach theorem, 
    \[ \overline{X\cap \mathcal{C}_{0}^{\infty}(\Omega;E)} = Y_{\perp}.\]

    We claim that $Y\subset X^{\perp}$. To this end, consider the set $\{ \nabla \varphi : \varphi\in\mathcal{D}(\Omega) \}$. We claim that it is weak-$*$ dense in $Y$. To see this, we will prove that for every $f\in\LipO{M}$, there exists a sequence $(f_{k})_{k\in\mathbb{N}}\subset\mathcal{D}(\Omega)$ such that $\nabla f_{k}\overset{*}{\rightharpoonup} \nabla f$ in $L^{\infty}(\Omega;E^{*})$.

    First, we notice that the convolution $F_{m}:=f*u_{m}$ is smooth in $\Omega_{m}$ and satisfies
    \[
        \nabla F_{m} = (\nabla f)*u_{m} \quad\text{in }\Omega_{m}.
    \]
    For $R>0$ we build a cutoff that both keeps a fixed distance from the boundary and has the radial $1/R$ gradient decay. 
    Let $\eta:\mathbb{R}^{n}\to\mathbb{R}$ be a $\mathcal{C}^\infty$ function with $\eta\equiv1$ on $\overline{B}(0,1)$ and $\mathrm{supp}(\eta)\subset \overline{B}(0,2)$, and set $\eta_R(x):=\eta((x-x_0)/R)$. Then $\|\nabla\eta_R\|_\infty\leq C/R$, where
    \[ C=\sup_{y\in\mathbb{R}^{n}} \|\nabla\eta(y)\| \]
    and $\mathrm{supp}(\nabla\eta_R)\subset \overline{B}(x_0,2R)\setminus \overline{B}(x_0,R)$. 
    Let $d(x):=\operatorname{dist}(x,\Omega^c)$ and choose $\theta\in \mathcal{C}^\infty([0,\infty))$ with $\theta\equiv0$ on $[0,\tfrac12]$, $\theta\equiv1$ on $[1,\infty)$. Define $\tau_R(x):=\theta(R\,d(x))$. Then $\tau_R\equiv1$ on $\Omega_{1/R}$ and $\tau_R\equiv0$ outside $\Omega_{1/(2R)}$. 
    Set
    \[
        \chi_R:=\eta_R\,\tau_R.
    \]
    Hence $0\leq\chi_R\leq1$, $\chi_R\equiv1$ on $K_R:=\overline{B}(x_0,R)\cap\Omega_{1/R}$, and $\mathrm{supp}(\chi_R)\subset \overline{B}(x_0,2R)\cap\Omega_{1/(2R)}$. 
    Denote
    \[
      A_R:=\mathrm{supp}(\nabla\eta_R)\cap\Omega_{1/R}
      \qquad 
      S_R:=\left\{x\in\Omega:\frac{1}{2R}<d(x)<\frac{1}{R}\right\}.
    \]
    On $A_R$ we have $\tau_R\equiv1$ and $\nabla\tau_R=0$, so $\nabla\chi_R=\nabla\eta_R$ there and thus
    \[
      \sup_{y\in A_{R}}\|\nabla\chi_R(y)\|\leq \frac{C}{R}.
    \]
    Moreover, for $m\geq 2R$ we have $\mathrm{supp}(\chi_R)\subset\Omega_{1/(2R)}\subset\Omega_m$, so the products with $F_m$ extended by $0$ outside $\Omega_{m}$ remain $\mathcal{C}^{\infty}$.
    Define
    \[
        \phi_{R,m}:=\chi_{R}\,F_{m}\in\mathcal D(\Omega;E),
    \]
    from which we see that $\nabla\phi_{R,m}=\chi_{R}(\nabla f*u_{m}) + F_{m}\,\nabla\chi_{R}$. We claim that $\nabla\phi_{R,m}\stackrel{*}{\rightharpoonup} \nabla f$ along a diagonal sequence.
    For any $\varphi\in L^{1}(\Omega;E)$,
    \[
    \begin{aligned}
        \langle \nabla\phi_{R,m}-\nabla f,\varphi\rangle
        &= \underbrace{\langle \nabla f*u_{m}-\nabla f,\chi_{R}\varphi\rangle}_{A_{R,m}}
        + \underbrace{\langle \nabla f,(\chi_{R}-1)\varphi\rangle}_{B_{R}}
        + \underbrace{\langle F_{m}\nabla\chi_{R},\varphi\rangle}_{C_{R,m}}.
    \end{aligned}
    \]
    Let $\tilde{u}_{m}(x):= u_{m}(-x)$ be the conjugate of $u_{m}$. Since $\langle \nabla f*u_{m},\psi\rangle=\langle \nabla f,\psi*\tilde u_{m}\rangle$ and $\psi*\tilde u_{m}\to\psi$ in $L^{1}$ for any $\psi\in L^{1}$, we get $A_{R,m}\to0$ as $m\to\infty$ with $R$ fixed. Also, $\chi_{R}\to 1$ pointwise and $|\chi_{R}-1||\varphi|\leq |\varphi|\in L^{1}$, hence $B_{R}\to0$ as $R\to\infty$.

    For $C_{R,m}$, since $f$ is Lipschitz and $f(x_{0})=0$, we have that for every $x\in\Omega_{m}$
    \[
      |F_{m}(x)|\leq \int |f(x-y)|u_{m}(y) dy =  \int |f(x-y)-f(x_0)||u_{m}(y)|dy \]
    \[ \leq \int \Lip{f}\|x-y-x_0\|u_{m}(y) dy \leq \Lip{f} \int(\|x-x_0\|+\|y\|)u_{m}(y)dy \]
    \[ \leq \Lip{f}\left(\|x-x_0\| + \frac{1}{m}\right).\]
    Decompose using the sets above:
    \[
      C_{R,m}=\langle F_m\nabla\chi_R,\varphi\,\mathds{1}_{A_R}\rangle+\langle F_m\nabla\chi_R,\varphi\,\mathds{1}_{S_R}\rangle.
    \]
    On $A_R$ we have $\|\nabla\chi_R\|_\infty\le C/R$, hence
    \[
      \|F_m\nabla\chi_R\|_{L^\infty(A_R)}
      \leq \frac{C}{R}\sup_{y\in A_R}|F_m(y)|.
    \]
    But since $A_R\subset\supp(\nabla\eta)\subset \overline{B}(x_{0},2R)\setminus\overline{B}(x_0,R)$ and $m\geq 2R$,  we see that if $y\in A_R$
    \[ |F_m(y)|\leq \Lip{f}\left( 2R + \frac{1}{m} \right) \leq \Lip{f}\left(2R+\frac{1}{2R}\right). \]
    Hence, for $R\geq 1$
    \[
      \|F_m\nabla\chi_R\|_{L^\infty(A_R)}
      \leq \frac{C}{R}\Lip{f}\left(2R+\frac{1}{2R}\right) = C\Lip{f}\left(2+\frac{1}{2R^2}\right) \leq 3C\Lip{f}.
    \]
    and therefore $|\langle F_m\nabla\chi_R,\varphi\,\mathds{1}_{A_R}\rangle|\leq 3C\Lip{f}\|\varphi\,\mathds{1}_{A_R}\|_{L^1}\to 0$ as $R\to\infty$.
    On $S_R$ the strip thickness is $\sim 1/R$, so $|S_R|\to0$ and $\|\varphi\,\mathds{1}_{S_R}\|_{L^1}\to0$ by absolute continuity of the integral; moreover $\|F_m\nabla\chi_R\|_{L^\infty(S_R)}$ is bounded uniformly in $m\ge 2R$ by the previous estimate of $F_m$ and the construction of $\chi_R$. Hence $|\langle F_m\nabla\chi_R,\varphi\,\mathds{1}_{S_R}\rangle|\to0$ as $R\to\infty$. Altogether, $\sup_{m\ge 2R}|C_{R,m}|\to0$.

    Given $\varepsilon>0$, choose $R$ large so that $|B_{R}|+\sup_{m\ge 2R}|C_{R,m}|<\varepsilon/2$, and then $m\ge 2R$ large with $|A_{R,m}|<\varepsilon/2$. Thus $\langle \nabla\phi_{R,m}-\nabla f,\varphi\rangle\to0$. Picking a diagonal sequence $(R_{k},m_{k})$ we obtain $\phi_{k}:=\phi_{R_{k},m_{k}}\in\mathcal D(\Omega;E)$ with
    \[
        \nabla\phi_{k}\overset{*}{\rightharpoonup} \nabla f \quad \text{in } L^{\infty}(\Omega;E^{*}).
    \]
    Setting $f_{k}:=\phi_{k}$ proves the desired weak-$*$ density of $\{\nabla\varphi:\varphi\in\mathcal D(\Omega)\}$ in $Y$.

    Returning to the original objective, which was to prove that $Y\subset X^{\perp}$, suppose that $g=\nabla f$ for some $f\in\LipO{M}$ and consider a sequence $(f_{k})_{k\in\mathbb{N}}$ in $\mathcal{D}(\Omega)$ such that $\nabla f_{k} \overset{*}{\rightharpoonup} g$ in $L^{\infty}(\Omega;E^{*})$. Then, for every $h\in X$
    \[ \langle g,h \rangle = \lim_{k\to\infty} \langle \nabla f_{k}, h \rangle. \]
    Since the extension by $0$ outside $\Omega$ of $f_{k}$ is smooth and compactly supported, it belongs to $\mathcal{D}(E)$. This implies that $\langle \nabla f_{k},h \rangle = \langle f_{k},-\textup{div}(h) \rangle = 0$ for every $k\in\mathbb{N}$, which leads to $\langle g,h \rangle = 0$, that is, $g\in X^{\perp}$.

    Recall that $Y=\overline{X\cap\mathcal{C}_{0}^{\infty}(\Omega;E)}^{\perp}$. This together with $Y\subset X^{\perp}$ implies that $\overline{X\cap\mathcal{C}_{0}^{\infty}(\Omega;E)}^{\perp}\subset X^{\perp}$, which leads to $\overline{X} = \overline{X\cap\mathcal{C}_{0}^{\infty}(\Omega;E)}$. Then, to obtain $\it{\ref{Xprop_2}}.$, we need to see that $X$ is closed.

    Suppose that a sequence $(h_{k})_{k\in\mathbb{N}}\subset X$ converges to $h\in L^{1}(\Omega;E)$. Since norm convergence implies weak convergence and $\nabla \varphi \in L^{\infty}(E;E^{*})$ for every $\varphi\in\mathcal{D}(E)$, we see that
    \[ \langle -\textup{div}(h),\varphi \rangle = \langle h,\nabla\varphi \rangle = \lim_{k\to\infty} \langle h_{k},\nabla\varphi \rangle = \lim_{k\to\infty} \langle -\textup{div}(h_{k}),\varphi \rangle = 0, \]
    which proves that $X$ is closed.

    Hence, $X=\overline{X} = \overline{X\cap\mathcal{C}_{0}^{\infty}(\Omega;E)}$, that is, $X\cap\mathcal{C}_{0}^{\infty}(\Omega;E)$ is dense in $X$, and
    \[ Y = \overline{X\cap \mathcal{C}_{0}^{\infty}(\Omega;E)}^{\perp} = X^{\perp} \quad\text{and}\quad  Y_{\perp} = \overline{X\cap \mathcal{C}_{0}^{\infty}(\Omega;E)} = X, \]
    which finishes the proof.
\end{proof}

\begin{remark}
    The fact that in the definition of $X$ the divergence $\textup{div}(h)$ is considered in the sense of distributions over $E$ instead of $\Omega$ is of utmost importance, since depending on that, the obtained sets might be different. This fact can be shown with an example in the case $E=\mathbb{R}$. Suppose that $\Omega = (-1,1)\subset \mathbb{R}$ and choose $0$ as the base point for the metric space $(M,d)$. Since $\Omega$ is convex, $d$ coincides with the usual metric of $\mathbb{R}$ restricted to $\Omega$. In this case, the set 
    \[ X' := \{ h \in L^{1}((-1,1)) : h' = 0 \text{ in } \mathcal{D}'((-1,1)) \} \]
    contains the constants functions, while on the other hand, $X=\{0\}$ thanks to connectedness. Notice that if Proposition \ref{Xprop} were true for $X'$ instead of $X$, we would have that
    \[ (L^{1}((-1,1))/X')^{*} \equiv X'^{\perp} = Y \equiv \LipO{(-1,1)}. \]
    Then, considering the function $f(t)=t$ (which belongs to $\LipO{(-1,1)}$) we would have that for every constant function on $(-1,1)$ (say, $h(x)=a$)
    \[ 0 = \langle f',h \rangle = a\int_{-1}^{1} dt = 2a, \]
    which is clearly a contradiction.

    More generally, suppose that $\Omega\subset E$ is a bounded domain with Lipschitz boundary. By the same argument, considering
    \[ X' := \{ h \in L^{1}(\Omega;E) : \textup{div}(h) = 0 \text{ in } \mathcal{D}'(\Omega) \}, \]
    if Proposition \ref{Xprop} were true for $X'$ instead of $X$, we would have that
    \[ L^{1}(\Omega;E)/X' \equiv X'^{\perp} = Y \equiv \LipO{M}.\]
    Then, for every $f\in\LipO{M}\cap\mathcal{C}^{\infty}(\Omega)$ and $h(x)=v$ constant in $\Omega$ we would have
    \[ 0 = \langle \nabla f,h \rangle = \int_{\Omega} \nabla f(x)\cdot v dx. \]
    But noticing that
    \[ \textup{div}(fv)(x) = \nabla f(x) \cdot v, \]
    by virtue of Gauss' Theorem \cite[Theorem 37.22]{LM05} we would have that
    \[ 0 = \int_{\Omega} \nabla f(x)\cdot v dx = \int_{\partial\Omega} fv \cdot ndS \quad \forall v\in E. \]
    Notice that, since $\Omega$ has Lipschitz boundary, $\overline{\Omega}$ coincides with the completion of $\Omega$ with respect to $d$, which implies that $f$ can be continuously extended to $\partial \Omega$, so the integral for the flux of $fv$ over $\partial\Omega$ is well defined. In other words, the net flux of the field $fv$ is $0$, whenever $f\in\LipO{M}$ is smooth and $v\in E$, which is certainly false in general.
\end{remark}

Recall the following classical result on Banach spaces.

\begin{theorem*}\cite[\textbf{Theorem 4.9}]{R91}
    Let $U$ be a closed subspace of a Banach space $V$.
    \begin{enumerate}
        \item The Hahn-Banach theorem extends each $u^{*}\in U^{*}$ to $v^{*}\in V^{*}$. Define
        \[ \sigma u^{*} = v^{*}+U^{\perp}. \]
        Then $\sigma$ is an isometric isomorphism of $U^{*}$ onto $V^{*}/U^{\perp}$.
        \item Let $\pi:V\to V/U$ be the quotient map. Put $W=V/U$. For each $w^{*}\in W^{*}$, define
        \[ \tau w^{*} = w^{*}\circ\pi. \]
        Then $\tau$ is an isometric isomorphism of $W^{*}$ onto $U^{\perp}$
    \end{enumerate}
\end{theorem*}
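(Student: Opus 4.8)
The plan is to verify directly that each of the two prescribed maps is a well-defined linear bijection and then that it is norm-preserving, the isometry being in both cases the only point where a sharp form of the Hahn--Banach theorem and the definition of the quotient norm enter in an essential way. First I would treat part (1). The map $\sigma$ is well defined because any two Hahn--Banach extensions $v_{1}^{*},v_{2}^{*}$ of $u^{*}$ have a difference that vanishes on $U$, so $v_{1}^{*}-v_{2}^{*}\in U^{\perp}$ and the two cosets coincide; linearity is then immediate. For injectivity, $\sigma u^{*}=U^{\perp}$ means the extension $v^{*}$ lies in $U^{\perp}$, hence vanishes on $U$, but $v^{*}$ restricts to $u^{*}$, so $u^{*}=0$. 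For surjectivity, given a coset $v^{*}+U^{\perp}$ I set $u^{*}:=\restricted{v^{*}}{U}\in U^{*}$, for which $v^{*}$ is by construction an extension, so that $\sigma u^{*}=v^{*}+U^{\perp}$. The isometry is the crux: the quotient norm of $\sigma u^{*}$ equals $\inf\{\|z^{*}\|:z^{*}\in V^{*},\ \restricted{z^{*}}{U}=u^{*}\}$; every extension $z^{*}$ satisfies $\|z^{*}\|\geq\|u^{*}\|$, so the infimum is at least $\|u^{*}\|$, while Hahn--Banach furnishes a norm-preserving extension realizing $\|u^{*}\|$, so the infimum equals $\|u^{*}\|$.

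For part (2) I would first observe that $\tau w^{*}=w^{*}\circ\pi$ lies in $U^{\perp}$: for $u\in U$ one has $\pi(u)=0$, whence $(w^{*}\circ\pi)(u)=0$, and boundedness follows since $\pi$ is bounded. Linearity is clear, and injectivity follows from the surjectivity of $\pi$, since $w^{*}\circ\pi=0$ forces $w^{*}=0$ on all of $W=V/U$. For surjectivity, a given $v^{*}\in U^{\perp}$ annihilates $U$ and therefore factors through the quotient: the assignment $w^{*}(\pi v):=v^{*}(v)$ is unambiguous because $\pi v_{1}=\pi v_{2}$ gives $v_{1}-v_{2}\in U$ and hence $v^{*}(v_{1})=v^{*}(v_{2})$, and then $\tau w^{*}=v^{*}$. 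Again the isometry is the delicate step and rests on the defining property of the quotient norm, namely that $\pi$ maps the open unit ball of $V$ onto the open unit ball of $W$. From $\|\pi v\|_{W}\leq\|v\|_{V}$ one obtains $\|\tau w^{*}\|\leq\|w^{*}\|$; conversely, every $\bar v\in W$ with $\|\bar v\|_{W}<1$ admits a lift $v\in V$ with $\|v\|_{V}<1$, so testing $w^{*}$ against such lifts recovers its full norm and yields $\|\tau w^{*}\|\geq\|w^{*}\|$.

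The main obstacle, insofar as one can speak of obstacles in so classical a statement, is to phrase the two isometry arguments cleanly, since both reduce to the sharp form of the appropriate duality: the norm-preserving extension theorem for (1) and the identification of the quotient-norm unit ball as the image of the ambient unit ball under $\pi$ for (2). Once these are invoked, the remaining verifications are purely algebraic, and the only structural hypothesis genuinely used is that $U$ is closed, which is what guarantees that the quotient seminorm on $V/U$ is an honest norm so that the displayed norm computations are meaningful.
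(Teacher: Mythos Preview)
Your argument is correct and is the standard proof of this classical duality theorem. Note, however, that the paper does not prove this statement at all: it is quoted verbatim from \cite[Theorem 4.9]{R91} as a tool to be applied, not as a result to be established, so there is no ``paper's own proof'' to compare against. Your proof is essentially the one found in Rudin.
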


Considering this representations for the duals of quotients and subspaces, we proceed to study the adjoint operator of $T$. More precisely, since $T$ is an isometry from $\LipO{M}$ onto $Y$, $T^{*}$ becomes an isometry from $Y^{*}$ onto $(\LipO{M})^{*}$ (see e.g. \cite[Theorem 4.10]{R91}). In the following, we study the behavior of $T^{*}$ restricted to $L^{1}(\Omega;E)/X$, considered as a closed subspace of $Y^{*}\equiv X^{\perp} \subset (L^{\infty}(\Omega;E^{*}))^{*}$, which finally leads us to the second main result of this work.

\begin{theorem}\label{main2}
    Let $\Omega\subset E$ be a domain and consider $M$ the set $\Omega$ endowed with its intrinsic distance and fix $x_{0}\in\Omega$ as the base point of $M$. Then, $\lipfree{M}$ is linearly isometric to $L^{1}(\Omega;E)/X$, where
    \[ X := \{ h\in L^{1}(\Omega;E) : \textup{div}(h) = 0 \text{ in } \mathcal{D}'(E) \}. \]
    Moreover, if $S$ is the preadjoint of the linear isometry $T$ in Theorem \ref{main1}, then $S[h] = \delta(x)$ if and only if $-\textup{div}(h) = \delta_{x}-\delta_{x_{0}}$ in $\mathcal{D}'(\Omega)$, where for $x\in\Omega$, $\delta_{x}$ is the Dirac distribution centered at $x$ and $[h]$ stands for the equivalence class of $h$ in $L^{1}(\Omega;E)/X$
\end{theorem}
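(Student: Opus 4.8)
The plan is to deduce the isometry from the duality already set up, by exhibiting a preadjoint for $T$. Recall that $\lipfree{M}$ is by construction the canonical predual of $\LipO{M}$, so that $(\lipfree{M})^{*}=\LipO{M}$ and the weak-$*$ topology on $\LipO{M}$ is the topology of pointwise convergence on bounded sets (with $\langle f,\delta(x)\rangle=f(x)$ and $\delta(x_{0})=0$). By Theorem \ref{main1}, $T$ is an isometric isomorphism of $\LipO{M}$ onto $Y$; by Proposition \ref{Xprop}, $Y=X^{\perp}$; and by \cite[Theorem 4.9]{R91} applied with $V=L^{1}(\Omega;E)$ and $U=X$, we may identify $Y=X^{\perp}$ isometrically with $(L^{1}(\Omega;E)/X)^{*}$, the weak-$*$ topology of $Y$ as this dual coinciding with the one induced from $(L^{1}(\Omega;E))^{*}=L^{\infty}(\Omega;E^{*})$. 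The whole result then follows once I show that $T$ is weak-$*$ to weak-$*$ continuous: in that case $T$ admits a preadjoint $S:L^{1}(\Omega;E)/X\to\lipfree{M}$ with $S^{*}=T$, characterized by
\[ \langle f,S[h]\rangle = \langle Tf,[h]\rangle = \int_{\Omega}\nabla f\cdot h\,dx \qquad (f\in\LipO{M}), \]
the pairing being independent of the chosen representative of $[h]$ since $\nabla f\in Y=X^{\perp}$.

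The crux is this weak-$*$ continuity, i.e. that for each $h\in L^{1}(\Omega;E)$ the functional $\Phi_{h}(f):=\int_{\Omega}\nabla f\cdot h\,dx$ belongs to $\lipfree{M}$ and not merely to $(\LipO{M})^{*}$. First I would treat $h\in\mathcal{C}_{0}^{\infty}(\Omega;E)$: integrating by parts, $\Phi_{h}(f)=-\int_{\Omega}f\,\textup{div}(h)\,dx=\int_{\Omega}f\,d\nu_{h}$, where $\nu_{h}:=-\textup{div}(h)\,dx$ is a compactly supported measure with $\nu_{h}(\Omega)=0$; such a mean-zero, compactly supported measure represents an element of $\lipfree{M}$ (being a norm limit of finitely supported mean-zero measures). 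For general $h$ I would approximate it in $L^{1}$ by $h_{k}\in\mathcal{C}_{0}^{\infty}(\Omega;E)$ and invoke the isometry $\|\nabla f\|_{\infty}=\lipnorm{f}$ from Theorem \ref{main1} to get $\|\Phi_{h}-\Phi_{h_{k}}\|\leq\|h-h_{k}\|_{1}\to0$, so that $\Phi_{h}$ is a norm limit of elements of $\lipfree{M}$ and hence lies in the closed subspace $\lipfree{M}$. This is the step I expect to be the main obstacle, since it is exactly what forces the preadjoint to land in $\lipfree{M}$, and it is where taking $\textup{div}$ over $E$ in the definition of $X$ (rather than over $\mathcal{D}'(\Omega)$) becomes essential.

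Once $S$ is available, showing it is an isometric isomorphism is routine duality. It is isometric because, $T$ being an isometric isomorphism onto $Y$,
\[ \|S[h]\|=\sup_{\lipnorm{f}\leq1}|\langle f,S[h]\rangle|=\sup_{\lipnorm{f}\leq1}|\langle Tf,[h]\rangle|=\sup_{\|g\|_{\infty}\leq1,\,g\in Y}|\langle g,[h]\rangle|=\|[h]\|, \]
the last equality being the isometric embedding of $L^{1}(\Omega;E)/X$ into its bidual. Thus $S$ is injective with closed range; the range is also dense, since an $f\in\LipO{M}$ annihilating it satisfies $\int_{\Omega}\nabla f\cdot h\,dx=0$ for all $h\in L^{1}(\Omega;E)$, forcing $\nabla f=0$ and hence $f=0$. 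Therefore $S$ is onto and $\lipfree{M}$ is linearly isometric to $L^{1}(\Omega;E)/X$.

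For the moreover clause, observe that $S[h]=\delta(x)$ holds iff $\langle f,S[h]\rangle=f(x)$ for every $f\in\LipO{M}$, i.e.
\[ \int_{\Omega}\nabla f\cdot h\,dx=f(x)\qquad\text{for all }f\in\LipO{M}. \]
For the forward implication I would specialize this identity to $f=\varphi-\varphi(x_{0})$ with $\varphi\in\mathcal{D}(\Omega)$ (which lies in $\LipO{M}$, with $\nabla f=\nabla\varphi$): then $\int_{\Omega}\nabla\varphi\cdot h\,dx=\langle-\textup{div}(h),\varphi\rangle$ equals $\varphi(x)-\varphi(x_{0})=\langle\delta_{x}-\delta_{x_{0}},\varphi\rangle$, giving $-\textup{div}(h)=\delta_{x}-\delta_{x_{0}}$ in $\mathcal{D}'(\Omega)$. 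For the converse, this distributional identity says precisely that the weak-$*$ continuous functional $\langle\,\cdot\,,S[h]-\delta(x)\rangle$ on $\LipO{M}$ vanishes on every $f=\varphi-\varphi(x_{0})$, $\varphi\in\mathcal{D}(\Omega)$. Since $\{\nabla\varphi:\varphi\in\mathcal{D}(\Omega)\}$ is weak-$*$ dense in $Y$ (established inside the proof of Proposition \ref{Xprop}) and $T^{-1}$ is weak-$*$ continuous (being the adjoint of $S^{-1}$), the set $\{\varphi-\varphi(x_{0}):\varphi\in\mathcal{D}(\Omega)\}=T^{-1}(\{\nabla\varphi\})$ is weak-$*$ dense in $\LipO{M}$; hence that functional vanishes identically, $S[h]=\delta(x)$, and the proof is complete.
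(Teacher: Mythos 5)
Your argument is correct, but it follows a genuinely different and more abstract route than the paper's. The paper works directly with the adjoint $T^{*}$ and proves the two inclusions concretely: surjectivity of $S$ comes from an explicit construction, for each segment $[x,y]\subset\Omega$, of a compactly supported field $h_{xy}\in L^{1}(\Omega;E)$ supported in a spindle around the segment, with $(T^{*})^{-1}(\delta(y)-\delta(x))=[h_{xy}]$; the inclusion $T^{*}(L^{1}(\Omega;E)/X)\subset\lipfree{M}$ is proved on the dense set of fields $\mathds{1}_{C}e_{i}$ ($C$ a hyper-rectangle) via Pettis measurability, Bochner integrability and Hille's theorem; and both directions of the ``moreover'' clause are read off from the computation $-\textup{div}\bigl(\sum_{i}h_{y_{i}y_{i+1}}\bigr)=\delta_{x}-\delta_{x_{0}}$. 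You replace all three steps: membership $S[h]\in\lipfree{M}$ is obtained by integration by parts for $h\in\mathcal{C}_{0}^{\infty}(\Omega;E)$ plus $L^{1}$-density and the isometry of Theorem \ref{main1}; surjectivity follows abstractly from closed range (isometry) together with dense range, via the annihilator argument that $\langle\nabla f,h\rangle=0$ for all $h$ forces $\nabla f=0$, hence $f=0$ on the connected $\Omega$; and the converse of the ``moreover'' clause is settled by the weak-$*$ density of $\{\nabla\varphi:\varphi\in\mathcal{D}(\Omega)\}$ in $Y$, which the paper indeed establishes inside the proof of Proposition \ref{Xprop}, together with weak-$*$ continuity of $T^{-1}=(S^{-1})^{*}$. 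Your route is shorter, and your converse is notably clean with respect to the $\mathcal{D}'(\Omega)$ formulation: it never needs to upgrade divergence information tested against $\mathcal{D}(\Omega)$ to a statement in $\mathcal{D}'(E)$, a passage the paper's final chain of equivalences handles only implicitly and which its own Remark shows is delicate. What you give up is exactly what the paper exploits afterwards: explicit, compactly supported representatives of $S^{-1}\delta(x)$ whose supports can be localized near any chosen piecewise linear path, used to recover \cite[Proposition 3.4]{CKK17}. Two spots deserve more detail in a final write-up: the claim that the compactly supported mean-zero measure $-\textup{div}(h)\,dx$ represents an element of $\lipfree{M}$ should be proved rather than asserted, e.g.\ by Bochner-integrating $u\mapsto\delta(u)$ against it (Pettis and Hille, as in the paper's second step, using that $d$ and the norm metric are locally equivalent, so $\delta$ is continuous and bounded on norm-compact subsets of $\Omega$); and you should record that $\varphi-\varphi(x_{0})\in\LipO{M}$ with $T(\varphi-\varphi(x_{0}))=\nabla\varphi$, since a smooth function with bounded gradient is Lipschitz for the intrinsic distance.
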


\begin{proof}
    This proof is divided in several stages. We start by showing that $(T^{*})^{-1}$ maps $\lipfree{M}\subset(\LipO{M})^{*}$ isometrically onto a closed subspace of $L^{1}(\Omega;E)/X\subset (L^{\infty}(\Omega;E^{*}))^{*}/Y^{\perp}$. Then, we prove a similar result for $T^{*}$, that is, that $T^{*}$ maps $L^{1}(\Omega;E)/X\subset (L^{\infty}(\Omega;E^{*}))^{*}/Y^{\perp}$ isometrically onto a closed subspace of $\lipfree{M}\subset(\LipO{M})^{*}$. From this, we get the first part of Theorem \ref{main2}, which says that $\lipfree{M}$ is linearly isometric to $L^{1}(\Omega;E)/X$. Then, by noticing that $S:=\restricted{T^{*}}{L^{1}(\Omega;E)/X}$ satisfies $S^{*}=T$, we show the characterization of evaluation functionals in terms of the operator $S$.

    \begin{enumerate}
   
        \item $(T^{*})^{-1}(\lipfree{M})\subset L^{1}(\Omega;E)/X$: Recall that (by definition) if $x_{0}\in\Omega$ is the base point for $M$, then $\delta(x_{0})=0$ in $(\LipO{M})^{*}$. From this, given $x\in\Omega$ and a piecewise linear path with nodes $x_{0}=y_{1},\ldots,y_{k}=x\in\Omega$, by linearity of $(T^{*})^{-1}$ we have
        \[ (T^{*})^{-1}(\delta(x)) = \sum_{i=1}^{k-1} (T^{*})^{-1}(\delta(y_{i+1})-\delta(y_{i})). \]
        Hence, it suffices to show that for any $x,y\in\Omega$ such that the segment $[x,y]$ is contained in $\Omega$, there exists $h\in L^{1}(\Omega;E)$ such that $(T^{*})^{-1}(\delta(y)-\delta(x))=[h]$  in $(L^{\infty}(\Omega;E^{*}))^{*}/Y^{\perp}$.

        Consider then $x,y\in\Omega$ such that $[x,y]\subset\Omega$. By virtue of compactness of this set, there exists $\varepsilon>0$ such that $[x,y]+\varepsilon \overline{B}\subset\Omega$. Define $A=\varepsilon\overline{B}\cap \{y-x\}^{\perp}$, that is, the closed ball of $E$ centered at the origin with radius $\varepsilon$ restricted to the subspace orthogonal to $y-x$. Let $\psi:[0,1]\to\mathbb{R}$ be a Lipschitz function satisfying the following conditions
        \begin{itemize}
            \item $\psi(0)=\psi(1)=0$, and
            \item $0<\psi\leq 1$ in $(0,1)$.
        \end{itemize}
        Define the set $H$ as
        \[ H:=\{ x+t(y-x) + \psi(t)u : t\in[0,1], u\in A \} \]
        Notice that $H\subset\Omega$ by virtue of the choice of $\varepsilon$ and $\psi$. Let $b_{1},\ldots,b_{n-1}\in\mathbb{R}^{n}$ be an orthogonal basis of $\{y-x\}^{\perp}$ and $b_{n}=y-x$. Consider the change of variables $r:\mathbb{R}^{n}\to\mathbb{R}^{n}$ given by 
        \[ r(u) := x+u_{n}b_{n}+\sum_{k=1}^{n-1}u_{k}b_{k}, \]
        which satisfies $|\det(Dr(u))|=|b_{1}|\cdots |b_{n}|$ (denoted by $B>0$ from now on) by virtue of orthogonality. If $S:=\{ u\in\mathbb{R}^{n-1} : \sum_{k=1}^{n-1} u_{k}b_{k} \in A \}$, we see that
        \[ r^{-1}(H) = \{ (v,t)\in\mathbb{R}^{n-1}\times\mathbb{R} : t\in[0,1] , v\in\psi(t)S \}. \]
        By virtue of the change of variables formula (see e.g. \cite[Theorem 3.9]{EG12}) we have that for every integrable function $h:\Omega\to\mathbb{R}$
        \[ \int_{\Omega} h(z) dz = \int_{r^{-1}(\Omega)} h(r(u)) |\det(Dr(u)) | du = B\int_{r^{-1}(\Omega)} h(r(u)) du.\]
        In particular, if $h$ is supported in $H$, and by virtue of Fubini's theorem
        \[ \begin{aligned}
            \int_{H} h(z) dz &= B\int_{r^{-1}(H)} h(r(u))  du \\
            &= B \int_{0}^{1}\int_{\psi(t)S} h(r(v,t)) dv dt \\
            &= B \int_{0}^{1} \int_{S} h\big(r(\psi(t)w,t)\big) \psi(t)^{n-1} dwdt,
        \end{aligned} \]
        where the last equality follows from the change of variables $w=\psi(t)v$ in $\mathbb{R}^{n-1}$, which is valid for every $t\in(0,1)$. In particular,
        \[ \begin{aligned}
            \int_{H} dz &= B\int_{0}^{1}\int_{S} \psi(t)^{n-1} dw dt \\
            &= B|S|\int_{0}^{1}\psi(t)^{n-1}dt,
        \end{aligned}  \]
        where $|S|$ is the volume of $S$ in $\mathbb{R}^{n-1}$. Since $\psi$ is continuous in $[0,1]$, we deduce that $H$ has finite volume.

        Let $I\subset(0,1)$ be the set where $\psi$ is differentiable and define the function $h_{xy}:\Omega\to E$ as
        
        \[ h_{xy}(z):= \begin{cases}
            \displaystyle\frac{y-x+\psi'(t)w}{B|S|\psi(t)^{n-1}} &, z=r(\psi(t)w,t) \text{ for some } w\in S, t\in I \\ 0&,\text{otherwise}
        \end{cases} \]
        We see that $h_{xy}$ is well defined, since whenever $z\in H\setminus\{x,y\}$, there exists a unique pair $(w,t)\in S\times(0,1)$ such that $z=r(\psi(t)w,t)$. Notice that $h_{xy}\in L^{1}(\Omega;E)$, since it is direct from its definition that it is measurable, supported in $H$ and, recalling that $|\psi'(t)|$ is bounded in $[0,1]$ by some $M>0$ as $\psi$ is Lipschitz, we have that
        \begin{align*}
            \int_{H}\|h_{xy}(z)\|dz &= B\int_{0}^{1}\int_{S} \|h_{xy}\big(r(\psi(t)w,t)\big)\| \psi(t)^{n-1} dwdt \\
            &= \frac{1}{|S|}\int_{0}^{1}\int_{S}\|y-x+\psi'(t)w\|dwdt \\
            &\leq \frac{1}{|S|}\int_{0}^{1}\int_{S}\|y-x\|+|\psi'(t)|\|w\|dwdt \\
            &\leq \frac{1}{|S|}\int_{0}^{1}\int_{S}\|y-x\|+M\varepsilon dwdt \\
            & = \|y-x\|+M\varepsilon.
        \end{align*}

%%%%%%%%%%%%%%%%%%%%%%%%%%%%%%%%%%%%%%%%%%%%%%%%%%%%%%%%%%%%%%%%%%%%%%%%%%%%%%%%%%%%%%%%

        We claim that $(T^{*})^{-1}(\delta(y)-\delta(x)) = [h_{xy}]$. Let $g=\nabla f$ for some $f\in\LipO{M}$. By definition of $(T^{*})^{-1}$ we have that
        \[ \langle (T^{*})^{-1}(\delta(y)-\delta(x)), g \rangle = \langle \delta(y)-\delta(x) , T^{-1}(\nabla f) \rangle \]
        \[ = \langle \delta(y)-\delta(x), f \rangle = f(y)-f(x). \]
        On the other hand, by noticing that for fixed $w\in S$
        \[ \frac{d}{dt}\big( r(\psi(t)w,t) \big) = y-x+\psi'(t)w, \]
        we have by virtue of Fubini's theorem that
        \[
            \begin{aligned}
            \langle [h_{xy}],g \rangle & = \int_{\Omega} h_{xy}(z)\cdot g(z) dz = \int_{H} h_{xy}(z)\cdot\nabla f(z) dz\\
            &= B\int_{S} \int_{0}^{1} h_{xy}\big(r(\psi(t)w,t)\big)\cdot\nabla f\big(r(\psi(t)w,t\big) \psi(t)^{n-1} dt dw\\
            &= \frac{1}{|S|}\int_{S}\int_{0}^{1}\nabla f\big(r(\psi(t)w,t)\big) \cdot \frac{d}{dt}\big(r(\psi(t)w,t)\big) dt dw \\
            &= \frac{1}{|S|}\int_{S} f(y)-f(x) dw = f(y)-f(x).
            \end{aligned}
        \]
        But noticing that for fixed $w\in S$, $\gamma_{w}(t):=r(\psi(t)w,t)=x+t(y-x)+\varphi(t)w$ is a Lipschitz path going from $x$ to $y$, we obtain that
         \[
            \langle [h_{xy}],g \rangle = \frac{1}{|S|}\int_{S} f(y)-f(x) du = f(y)-f(x),
        \]
        which finally implies that $(T^{*})^{-1}(\delta(y)-\delta(x))=[h_{xy}]$, since the integration is valid for every $g=\nabla f$. Taking into account the remark at the beginning of this part of the proof, we see that for every $x\in\Omega$
        \[ (T^{*})^{-1}(\delta(x)) = \left[\sum_{i=1}^{k-1} h_{y_{i}y_{i+1}}\right]. \]
        Since $\textup{span}\{ \delta(x) : x\in M \}$ is dense in $\lipfree{M}\subset (\LipO{M})^{*}$ and $T^{*}$ is a bijective isometry, we deduce that $(T^{*})^{-1}$ maps $\lipfree{M}$ isometrically onto
        \[ \overline{\textup{span}}\{ [h_{xy}] : x,y\in M, [x,y]\subset\Omega \}, \]
        which is a closed subspace of $L^{1}(\Omega;E)/X \subset (L^{\infty}(\Omega;E^{*}))^{*}/Y^{\perp}$ by noticing that $[h_{xy}]\cap L^{1}(\Omega;E) \in L^{1}(\Omega;E)/X$.
        
        \item $T^{*}(L^{1}(\Omega;E)/X)\subset \lipfree{M}$: Since simple integrable functions are dense in $L^{1}(\Omega;E)$, it suffices to show that $T^{*}([\mathds{1}_{C}e_{i}])\in\lipfree{M}\subset(\LipO{M})^{*}$ whenever $C=[a_{1},b_{1}]\times\ldots\times[a_{n},b_{n}]$ is an hyper-rectangle contained in $\Omega$ and $e_{i}$ is the $i^{\text{th}}$ element of the canonical basis of $E$.

        In this setting, denote by $C_{i}$ the product of the intervals $[a_{k},b_{k}]$, except for $[a_{i},b_{i}]$. We see that if $f\in\LipO{M}$, then
        \[ \langle S([\mathds{1}_{C}e_{i}]),f \rangle = \langle \mathds{1}_{C}e_{i},\nabla f \rangle = \int_{C} \partial_{i}f(z) dz = \int_{C_{i}}\int_{a_{i}}^{b_{i}} \partial_{i}f(u,t) dtdu \]
        \[ = \int_{C_{i}} f(u,b_{i}) - f(u,a_{i}) du \]
        Define $F:C_{i}\to\lipfree{M}$ by $F(u) = \delta(u,b_{i})-\delta(u,a_{i})$. Notice that $F$ is separably-valued (since $\lipfree{M}$ is separable) and weakly-measurable (since for every $f\in\LipO{M}$ the function $\langle f,F(u) \rangle = f(u,b_{i})-f(u,a_{i})$ is continuous in $C_{i}$). Then, by virtue of Pettis's Measurability Theorem \cite[Theorem II.1.2]{DU77} we have that $F$ is measurable. Moreover, thanks to \cite[Theorem II.2.2]{DU77} it is Bochner integrable, since
        \[ \int_{C_{i}} \|F(u)\| du = \int_{C_{i}} \|\delta(u,b_{i})-\delta(u,a_{i})\| du \]
        \[ = \int_{C_{i}} d((u,b_{i}),(u,a_{i})) du = |b_{i}-a_{i}|\|e_{i}\| \int_{C_{i}} du \]
        \[ = |b_{i}-a_{i}||C_{i}|\|e_{i}\| = |C|\|e_{i}\| < \infty. \]
        Then, by virtue of Hille's Theorem \cite[Theorem II.2.6]{DU77}, we deduce that for every $f\in\LipO{M}$
        \[ \langle S([\mathds{1}_{C}e_{i}]),f \rangle = \left\langle \int_{C_{i}} \delta(u,b_{i})-\delta(u,a_{i}) du , f \right\rangle. \]
        In other words, $S([\mathds{1}_{C}e_{i}])$ is the Bochner integral of the difference between the values of the evaluation functional $\delta$ on the faces of $C$ at $x_{i}=b_{i}$ and $x_{i}=a_{i}$, which belongs to $\lipfree{M}$.

        Since $\textup{span}\{ [\mathds{1}_{C}e_{i}] : C\subset\Omega \text{ hyper-rectangle}, 1\leq i \leq n \}$ is dense in $L^{1}(\Omega;E)/X \subset (L^{\infty}(\Omega;E^{*}))^{*}/Y^{\perp}$ and $T^{*}$ is a bijective isometry, we deduce that $T^{*}$ maps $L^{1}(\Omega;E)/X$ isometrically onto
        \[ \overline{\textup{span}}\left\{ \int_{C_{i}} \delta(u,b_{i})-\delta(u,a_{i}) du : C\subset\Omega \text{ hyper-rectangle}, 1\leq i \leq n \right\}, \]
        which is a closed subspace of $\lipfree{M}\subset (\LipO{M})^{*}$.

        \item $L^{1}(\Omega;E)/X \equiv \lipfree{M}$: Consider $S:L^{1}(\Omega;E)/X\to\lipfree{M}$ given by the restriction of $T^{*}$ to $L^{1}(\Omega;E)/X$. By virtue of step $2$, $S$ is well defined. Moreover, by virtue of step $1$, $S$ is surjective. But since $T$ is a bijective isometry, so is $T^{*}$, which implies that $S$ is a linear isometry. In particular, $S$ is also injective. This shows that $S$ defines an isometric isomorphism between $L^{1}(\Omega;E)/X$ and $\lipfree{M}$. Moreover, for $f\in\LipO{M}$ and $[h]\in L^{1}(\Omega;E)/X$
        \[ \langle S^{*}f,[h] \rangle = \langle f,S[h] \rangle = \langle f,T^{*}[h] \rangle = \langle Tf,[h] \rangle, \]
        that is, $S^{*}=T$, or in other words, $S$ is the preadjoint of $T$.
        \item $S[h]=\delta(x) \Leftrightarrow -\textup{div}(h)=\delta_{x}-\delta_{x_{0}}$: In $(1)$, we shown that
        \[ (T^{*})^{-1}(\delta(x)) = \left[\sum_{i=1}^{k-1} h_{y_{i}y_{i+1}}\right], \]
        where $y_{i}\in\Omega$ are the nodes of a piecewise linear path going from $x_{0}$ to $x$.
        This is equivalent to
        \[ S\left[\sum_{i=1}^{k-1} h_{y_{i}y_{i+1}}\right] = \delta(x). \]
        Then, in order to prove that $S[h]=\delta(x) \Leftrightarrow -\textup{div}(h)=\delta_{x}-\delta_{x_{0}}$, it suffices to show that
        \[ -\textup{div}\left( \sum_{i=1}^{k-1} h_{y_{i}y_{i+1}} \right) = \delta_{x}-\delta_{x_{0}}. \]
        The reason for this is that
        \[ S[h] = \delta(x) \iff [h]=\left[\sum_{i=1}^{k-1} h_{y_{i}y_{i+1}}\right] \]
        \[ \iff \textup{div}\left(h-\sum_{i=1}^{k-1} h_{y_{i}y_{i+1}}\right)=0 \iff -\textup{div}(h) = -\textup{div}\left(\sum_{i=1}^{k-1} h_{y_{i}y_{i+1}}\right). \]

        For $\varphi\in\mathcal{D}(E)$, we see that $f:= \varphi - \varphi(x_{0})$ restricted to $\Omega$ belongs to $\LipO{M}$ and $\nabla f = \nabla \varphi$ in $E$. Hence, noticing that each $h_{y_{i}y_{i+1}}$ is compactly supported in $\Omega$ implies that their sum also is compactly supported in $\Omega$, we see that
        \[ \left\langle -\textup{div}\left(\sum_{i=1}^{k-1} h_{y_{i}y_{i+1}}\right),\varphi \right\rangle = \left\langle \sum_{i=1}^{k-1} h_{y_{i}y_{i+1}},\nabla\varphi \right\rangle = \sum_{i=1}^{k} \langle h_{x_{i-1}x_{i}},\nabla f \rangle  \]
        \[ = \sum_{i=1}^{k} f(x_{i})-f(x_{i-1}) = \sum_{i=1}^{k} \varphi(x_{i})-\varphi(x_{i-1}) \]
        \[ = \varphi(x)-\varphi(x_{0}) = \langle\delta_{x}-\delta_{x_{0}},\varphi \rangle. \]
        Hence,
        \[ -\textup{div}\left( \sum_{i=1}^{k-1} h_{y_{i}y_{i+1}} \right) = \delta_{x}-\delta_{x_{0}}, \]
        which concludes the proof.
    \end{enumerate}

\end{proof}

\begin{remark}
    In \cite{CKK17}, Proposition 3.4 states that when $\Omega\subset E$ is a convex domain, there exists a compactly supported class representative $h\in L^{1}(\Omega;E)$ such that $\textup{div}(h)=\delta_{x_{0}}-\delta_{x}$. In the proof of Theorem \ref{main2}, the exhibited class representative given by
    \[ \overline h := \sum_{i=1}^{k-1} h_{y_{i}y_{i+1}} \]
    is compactly supported, being the sum of finitely many compactly supported functions, which can even be defined in a way such that their supports are pairwise disjoint. 
    
    Notice also that in step $1$ of the proof of Theorem \ref{main2}, the function $h_{xy}$ associated to $x,y\in\Omega$ such that $[x,y]\subset\Omega$ is essentially defined by means of an $(n-1)$-dimensional disk with a fixed radius oriented orthogonally to the segment and a function which changes that radius when going from $x$ to $y$. The conditions for $\psi$ were given in full generality, but we could have even chosen a regular function, such as $\psi(t)=t(1-t)$. The same construction can be made changing the disk for any other flat $(n-1)$-dimensional surface, as long as it has positive measure and $H$ remains contained in $\Omega$. In other words, for every $x\in\Omega$ and any piecewise linear curve going from $x_{0}$ to $x$, it is possible to find a class representative of $S^{-1}\delta(x)$ which is not only compactly supported, but such as its support is as close as we want to the chosen curve.

    Notice that the only crucial fact in the construction of these functions is that their support must in some way ``collapse" at both $x$ and $y$, and their values ``flow" from $x$ to $y$, being this flow solenoidal.
\end{remark}

%------------------------------------------------------------
\section{Final comments} \label{section4}

In what follows, we state the main results from \cite{BCJ05} and \cite{CKK17}, which can be viewed as particular cases of Theorem \ref{main2}. More precisely, we focus on \cite[Theorem 2.4]{BCJ05} and \cite[Theorem 1.1]{CKK17}.

First, we refer to \cite{BCJ05}. Let $K=\overline{\Omega}$ where $\Omega$ is a bounded connected open subset of $E$ with Lipschitz boundary. $K$ is endowed with the extension of its intrinsic distance to its completion. Notice that thanks to the Lipschitz boundary condition, the completion of $\Omega$ endowed with its intrinsic distance is simply its closure in $E$ with its correspondent intrinsic distance. Define $T_{\mu}$, for $\mu\in\lipfree{K}$, as the distribution given by
\[ \langle T_{\mu}, \varphi \rangle = \langle \varphi,\mu \rangle \quad \forall \varphi\in C^{\infty}(K). \]

\begin{theorem*}\cite[\textbf{Theorem 2.4}]{BCJ05}
    The following equality holds between subsets of $\mathcal{D}'(K)$ (the distributions over $K$):
    \[ \left\{ T_{\mu} : \mu\in\lipfree{K} \right\} = \left\{ -\textup{div}(h) : h\in L^{1}(\Omega;E) \right\}. \]
    Furthermore, if $X$ denotes the closed subspace
    \[ X:= \left\{ h\in L^{1}(\Omega;E) : \textup{div}(h) = 0 \right\}, \]
    the linear map $h\in L^{1}(\Omega;E)/X \mapsto -\textup{div}(h) \in \lipfree{K}$ is an isometry, i.e.:
    \[ \|h\|_{L^{1}(\Omega;E)/X} = \|\textup{div}(h)\|. \]
\end{theorem*}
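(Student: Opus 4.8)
The plan is to deduce this statement directly from Theorem \ref{main2}, using the invariance of Lipschitz-free spaces under metric completion. First I would observe that, because $\Omega$ has Lipschitz boundary, $K=\overline{\Omega}$ endowed with the extended intrinsic distance is precisely the completion of $M=(\Omega,d)$ (as noted in the paragraph preceding the statement). It is classical that every Lipschitz function on a metric space extends uniquely, with the same Lipschitz constant, to its completion, so $\LipO{M}$ and $\LipO{K}$ are canonically isometric and dually $\lipfree{M}\equiv\lipfree{K}$. Combining this with Theorem \ref{main2} applied to $M$ yields a linear isometry $S:L^{1}(\Omega;E)/X\to\lipfree{K}$, where $X$ is exactly the divergence-free subspace (divergence taken in $\mathcal{D}'(E)$) appearing in both statements.

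The core of the argument is then to identify, under this isometry, the element $S[h]\in\lipfree{K}$ with the distribution $-\textup{div}(h)$, thereby matching the map of Theorem \ref{main2} with the map $[h]\mapsto-\textup{div}(h)$ of the present statement. Given $\varphi\in C^{\infty}(K)$, I would set $f:=\varphi-\varphi(x_{0})$, which belongs to $\LipO{M}$ and is smooth, and compute, using $S^{*}=T=\nabla$ (established in Theorem \ref{main2}) together with integration by parts,
\[ \langle T_{S[h]},\varphi \rangle = \langle f,S[h] \rangle = \langle Tf,[h] \rangle = \langle \nabla f,h \rangle = \int_{\Omega}\nabla\varphi\cdot h\,dx = \langle -\textup{div}(h),\varphi \rangle. \]
Here the pairing $\langle\varphi,\mu\rangle$ is insensitive to additive constants because $\mu\in\lipfree{K}$ annihilates constant functions, so replacing $\varphi$ by $f$ is harmless; the last equality is legitimate because $h$, extended by zero, is supported in $\overline{\Omega}$, so the pairing of $-\textup{div}(h)$ against $\varphi$ depends only on $\varphi$ and $\nabla\varphi$ on $\overline{\Omega}$, and any extension of $\varphi$ to a test function on $E$ gives the same value. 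This shows $T_{S[h]}=-\textup{div}(h)$ for every $h$.

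With this identification in hand, both assertions follow at once. Since $S$ is a bijection from $L^{1}(\Omega;E)/X$ onto $\lipfree{K}$, as $[h]$ ranges over all classes the set $\{T_{\mu}:\mu\in\lipfree{K}\}$ and the set $\{-\textup{div}(h):h\in L^{1}(\Omega;E)\}$ describe the same subset of $\mathcal{D}'(K)$, giving the claimed set equality. For the isometric statement, the fact that $S$ is an isometry yields $\|h\|_{L^{1}(\Omega;E)/X}=\|S[h]\|_{\lipfree{K}}=\|T_{S[h]}\|=\|\textup{div}(h)\|$, where the norm on the distribution side is the one transported from $\lipfree{K}$ through the identification.

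The step I expect to require the most care is the reduction $\lipfree{K}\equiv\lipfree{M}$, and specifically the claim that $K=\overline{\Omega}$ with the extended intrinsic metric really is the completion of $(\Omega,d)$: this is where the Lipschitz boundary hypothesis is essential, since for a general domain the intrinsic distance may fail to extend continuously to $\partial\Omega$, and the completion could add points of infinite intrinsic distance or points not lying in $\overline{\Omega}$. A secondary point worth verifying is that the divergence-free space $X$ in the cited statement is understood with divergence in $\mathcal{D}'(E)$ rather than $\mathcal{D}'(\Omega)$; as the preceding remark shows, only the former choice makes the isometry correct, and it coincides with the subspace $X$ of Theorem \ref{main2}.
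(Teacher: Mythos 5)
Your proposal is correct and takes essentially the same route as the paper, which presents this result as a corollary of Theorem \ref{main2}: the Lipschitz boundary guarantees that $K=\overline{\Omega}$ with the extended intrinsic metric is the completion of $(\Omega,d)$, so $\lipfree{M}\equiv\lipfree{K}$, and the distributional identification $T_{S[h]}=-\textup{div}(h)$ matches the two statements. Your write-up merely makes explicit the details the paper's discussion in Section \ref{section4} leaves implicit --- the pairing computation via $S^{*}=T$ and the point, addressed in the Remark following Theorem \ref{main2}, that the divergence in the definition of $X$ must be taken in $\mathcal{D}'(E)$ rather than $\mathcal{D}'(\Omega)$.
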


We see that the equality in the space of distributions is actually the same obtained in Theorem \ref{main2} thanks to the definition of $T_{\mu}$ and the fact that $K$ is compact. In this framework, the requirement of Lipschitz regularity for the boundary of $\Omega$ assures that $K$ remains embedded in $E$. An example where this is not true is the following

\begin{example} 
    Consider the open bounded subset of $\mathbb{R}^{2}$ endowed with $\|\cdot\|_{1}$ given by $\Omega = B(0,1)\setminus \{ (t,0) : t\geq 0 \}$. Its closure is clearly $\overline{B}(0,1)$. But notice that the intrinsic distance in $\Omega$ is given by
    \[ d(x,y) = \left\{ \begin{array}{ccl} \|y-x\|_{1} &,& [x,y]\subset\Omega \\ \|x\|_{1}+\|y\|_{1} &,& [x,y]\not\subset\Omega \end{array} \right. \]
    Notice that this metric is not equivalent to the restriction to $\Omega$ of the metric induced by the norm. To see this, it suffices to consider the points in $\Omega$ $x_{t}=(1/2,t)$ and $y_{t}=(1/2,-t)$ for $t\in(0,1)$. Clearly, $[x_{t},y_{t}]\not\subset \Omega$, from which $d(x_{t},y_{t})=1+2t$. On the other hand, $\|x_{t}-y_{t}\|_{1}=2t$, from which we see that as $t\to 0$
    \[ \frac{d(x_{t},y_{t})}{\|x_{t}-y_{t}\|_1} = \frac{1+2t}{2t} \to \infty. \]
    With a similar procedure, we can obtain actually exhibit a function that is Lipschitz in $(\Omega,d)$ but not in $(\Omega,\|\cdot\|_{1})$. More precisely, this function is given by the distance to $(1^{-},0^{+})$ within $\Omega$, that is
    \[ f(x) = \left\{ \begin{array}{ccl} 1-x_{1}+x_{2}&,& x_{2}\geq 0 \\ 1+|x_{1}|-x_{2}&,& x_{2}<0 \end{array} \right. \]
    Notice that in this example, although $\Omega$ is open and bounded, it does not have Lipschitz boundary, which implies that there are Lipschitz functions in $(\Omega,d)$ that cannot be extended to the closure of $\Omega$. Nevertheless, these functions can be extended to the completion of $\Omega$ with respect to $d$. This completion is not embedded in $\mathbb{R}^{2}$ since the metrics are not equivalent, but it can be easily shown that the aforementioned completion (as a metric space) is isometric to the following subset of $\mathbb{R}^{3}$ endowed with $\|\cdot\|_{1}$
    \[ M_{\Omega} = \overline{B}(0,1)\cap(\{ (x,y,0) : x\leq 0 \} \cup \{ (0,y,z) : yz\leq 0 \}). \]

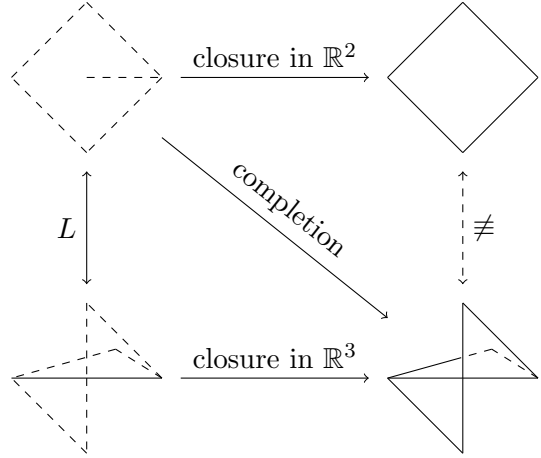
\begin{figure}[h]
        \centering
        \begin{tikzpicture}
            \draw[dashed] (-1,0) -- (0,1);
            \draw[dashed] (0,1) -- (1,0);
            \draw[dashed] (1,0) -- (0,-1);
            \draw[dashed] (0,-1) -- (-1,0);
            \draw[dashed] (0,0) -- (1,0);

            \draw[<->] (0,-5/4) -- (0,-11/4) node[midway,left] {$L$};

            \draw[->] (1,-4/5) -- (4,-16/5) node[sloped,midway, above]{completion};

            \draw[dashed,<->] (5,-5/4) -- (5,-11/4) node[midway,right] {$\not\equiv$};
            
            \draw[->] (5/4,0) -- (15/4,0) node[midway,above] {closure in $\mathbb{R}^{2}$};
            
            \draw (4,0) -- (5,1);
            \draw (5,1) -- (6,0);
            \draw (6,0) -- (5,-1);
            \draw (5,-1) -- (4,0);

            \draw[dashed] (0,-4,-1) -- (-1,-4,0);
            \draw[dashed] (1,-4,0) -- (0,-3,0);
            \draw (-1,-4,0) -- (1,-4,0);
            \draw[dashed] (0,-3,0) -- (0,-4,0);
            \draw[dashed] (0,-4,0) -- (0,-5,0);
            \draw[dashed] (0,-5,0) -- (-1,-4,0);
            \draw[dashed] (1,-4,0) -- (0,-4,-1);

            \draw[->] (5/4,-4) -- (15/4,-4) node[midway,above] {closure in $\mathbb{R}^{3}$};

            %(5,-4,-1)+t(-1,0,1)

            \draw[dashed] (5,-4,-1) -- (14/3,-4,-2/3);
            \draw (14/3,-4,-2/3) -- (4,-4,0);
            \draw (6,-4,0) -- (5,-3,0);
            \draw (4,-4,0) -- (6,-4,0);
            \draw (5,-3,0) -- (5,-4,0);
            \draw (5,-4,0) -- (5,-5,0);
            \draw (5,-5,0) -- (4,-4,0);
            \draw[dashed] (6,-4,0) -- (5,-4,-1);
            
        \end{tikzpicture}
        \caption{$\Omega$ and its closure in $\mathbb{R}^{2}$ vs $M$ and its completion embedded in $\mathbb{R}^{3}$}
        \label{fig:embedding}
    \end{figure}
    
    In other words, $M_{\Omega}$ is obtained by embedding $\Omega$ in the $XY$ plane of $\mathbb{R}^{3}$ and then folding the flaps given by the 1st and 4th quadrants upwards and downwards, respectively, as shown in Figure \ref{fig:embedding}. It is not difficult to see that this process (which we denote by a corresponding isometric embedding $L$) gives an isometry between $\Omega$ and $M_{\Omega}$. Moreover, the completion of $M_{\Omega}$ is simply the closure of this set in $\mathbb{R}^{3}$.

    From this, the extension of Lipschitz functions defined over $(\Omega,d)$ and the extension of the distance itself becomes evident thanks to this embedding.

\end{example} 

Now we refer to \cite{CKK17}, where a similar result is obtained in a different context.

\begin{theorem*}\cite[\textbf{Theorem 1.1}]{CKK17}
    Let $\Omega\subset E$ be a non-empty convex open set. Then, the Lipschitz free space $\lipfree{\Omega}$ is canonically isometric to the quotient space
    \[ L^{1}(\Omega;E)/X. \]
    Moreover, if $x_{0}\in\Omega$ is the base point and $x\in\Omega$ is arbitrary, then in this identification we have
    \[ \delta(x) \leftrightarrow [h] \iff h\in L^{1}(\Omega;E) \text{ and } \textup{div}(h) = \delta_{x_{0}}-\delta_{x} \text{ in } \mathcal{D}'(E), \]
    where $\delta_{y}$ denotes the Dirac distribution supported at $y$.
\end{theorem*}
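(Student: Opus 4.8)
The plan is to leverage the isometric isomorphism $T\colon\LipO{M}\to Y$ from Theorem \ref{main1} together with the duality $Y=X^{\perp}$, $X=Y_{\perp}$ from Proposition \ref{Xprop}. By the classical duality for subspaces and quotients (Rudin's Theorem 4.9 quoted above), $Y$ is isometrically the dual of $L^{1}(\Omega;E)/X$, so $T^{*}$ is an isometric isomorphism of $Y^{*}\equiv (L^{\infty}(\Omega;E^{*}))^{*}/Y^{\perp}$ onto $(\LipO{M})^{*}$, and the canonical embedding $L^{1}(\Omega;E)\hookrightarrow (L^{\infty}(\Omega;E^{*}))^{*}$ carries $X$ into $Y^{\perp}$, realizing $L^{1}(\Omega;E)/X$ as a closed subspace of $Y^{*}$. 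The whole point is then to show that, under $T^{*}$, this copy of $L^{1}(\Omega;E)/X$ matches exactly the canonical predual $\lipfree{M}=\overline{\textup{span}}\{\delta(x):x\in M\}\subset(\LipO{M})^{*}$.

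I would prove this by two inclusions. First, $(T^{*})^{-1}(\lipfree{M})\subset L^{1}(\Omega;E)/X$: since $\delta(x_{0})=0$ and $\delta(x)$ telescopes along any piecewise linear path $x_{0}=y_{1},\dots,y_{k}=x$, it suffices to realize $(T^{*})^{-1}(\delta(y)-\delta(x))$ as a class $[h_{xy}]$ whenever $[x,y]\subset\Omega$. For this I would build, in tubular coordinates adapted to an orthogonal frame with last vector $y-x$, a field $h_{xy}$ supported in a thin tube whose cross-section is an $(n-1)$-disk rescaled by a profile $\psi$ vanishing at the two endpoints, chosen so that $\int_{\Omega}h_{xy}\cdot\nabla f=f(y)-f(x)$ for every $f\in\LipO{M}$; a change of variables plus Fubini gives both $h_{xy}\in L^{1}(\Omega;E)$ and the pairing identity, which says precisely $T^{*}[h_{xy}]=\delta(y)-\delta(x)$. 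Second, $T^{*}(L^{1}(\Omega;E)/X)\subset\lipfree{M}$: since simple functions are dense, I would check it on classes $[\mathds{1}_{C}e_{i}]$ with $C$ a hyper-rectangle, where a one-variable integration shows $T^{*}[\mathds{1}_{C}e_{i}]$ acts on $f$ as the integral over the opposite faces of $C$ of the $f$-differences; recognizing the integrand as $u\mapsto\delta(u,b_{i})-\delta(u,a_{i})$ and invoking Pettis measurability, Bochner integrability and Hille's theorem identifies $T^{*}[\mathds{1}_{C}e_{i}]$ as a genuine $\lipfree{M}$-valued Bochner integral.

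With both inclusions in hand, $S:=\restricted{T^{*}}{L^{1}(\Omega;E)/X}$ is well defined into $\lipfree{M}$ (second inclusion), onto $\lipfree{M}$ (first inclusion, using density of the spanning sets and bijectivity of $T^{*}$), and isometric as a restriction of the isometry $T^{*}$; hence $S$ is an isometric isomorphism $L^{1}(\Omega;E)/X\to\lipfree{M}$, and dualizing the identity $\langle S[h],f\rangle=\langle [h],Tf\rangle$ yields $S^{*}=T$, so $S$ is the announced preadjoint. For the Dirac characterization it then suffices to show $-\textup{div}\big(\sum_{i}h_{y_{i}y_{i+1}}\big)=\delta_{x}-\delta_{x_{0}}$ in $\mathcal{D}'(\Omega)$: pairing with $\varphi\in\mathcal{D}(\Omega)$, setting $f=\varphi-\varphi(x_{0})\in\LipO{M}$ and using the pairing identity for each $h_{y_{i}y_{i+1}}$ telescopes to $\varphi(x)-\varphi(x_{0})$.

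I expect the main obstacle to be the construction and verification of the representative $h_{xy}$. The delicate points are that its support must ``collapse'' at both endpoints so that the field stays integrable despite the $\psi(t)^{-(n-1)}$ weight, that the induced flow be solenoidal so that $[h_{xy}]$ is a well-defined class modulo $X$, and that the tubular change of variables reduce the pairing $\int h_{xy}\cdot\nabla f$ to a clean fundamental-theorem-of-calculus computation along the fibre curves $t\mapsto r(\psi(t)w,t)$ producing exactly $f(y)-f(x)$. Once this local building block is in place, the remainder is a bookkeeping assembly via telescoping and density.
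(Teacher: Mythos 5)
Your proposal is correct and follows essentially the same route as the paper: there the statement is deduced as the convex special case of Theorem \ref{main2}, whose proof proceeds exactly as you outline --- the tubular representatives $h_{xy}$ realizing $(T^{*})^{-1}(\delta(y)-\delta(x))$, the hyper-rectangle step identified as a Bochner integral via Pettis's and Hille's theorems, the restriction $S=\restricted{T^{*}}{L^{1}(\Omega;E)/X}$ with $S^{*}=T$, and the telescoping divergence computation for the Dirac characterization. The only point you leave implicit, and which the paper states explicitly, is that for a convex domain the metric induced by the norm coincides with the intrinsic metric, so that $\lipfree{\Omega}$ is indeed $\lipfree{M}$ and the general theorem applies verbatim.
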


It is evident the relation between this result and Theorem \ref{main2} just in the way it is written. It is important to notice that in \cite[Theorem 1.1]{CKK17}, the metric used in $\Omega$ is the one induced directly by the norm of $E$. Since $\Omega$ is convex, this metric coincides with the intrinsic metric induced by the norm, which shows that Theorem \ref{main2} is a generalization of \cite[Theorem 1.1]{CKK17}.

In both cases, Lipschitz regularity of the boundary of the domain was used at some point in the proofs. Our approach avoids completely that assumption. It can be seen, by comparing the proofs present in this paper against those in \cite{BCJ05} and \cite{CKK17}, that the use of adequate approximations is the key difference, since these methods allow us to avoid the explicit study of the behavior of the functions on the completion of $\Omega$ and restrict ourselves to the given domain.

As mentioned in the introduction, domains in finite-dimensional spaces endowed with its intrinsic metric are somewhat opposite to the case of purely $1$-unrectifiable spaces. In that sense, there are some intermediate cases for subsets of a finite-dimensional space for which we lack a characterization. Such cases are, e.g., non-connected open subsets of $E$ and closed subsets of $E$ such that the closure of its interior is strictly contained in the set itself.

Another question arises for $n$-dimensional Lipschitz manifolds. In this case, as they are locally Lipeomorphic to an open connected subset of $\mathbb{R}^{n}$, it might be possible to adapt the techniques used in the present work to be applied to the more general setting of Lipschitz manifolds.

%---------------------------------------------------

\section*{Acknowledgements}

The author would like to thank Anton Svensson for the fruitful discussions on the topics present in this article. The author would also like to thank the referee for a careful reading and helpful remarks that improved the present work.

\section*{Funding}

The author was partially supported by Universidad de O'Higgins (112011002-PD-17706171-9)

%---------------------------------------------------

\end{document}